\documentclass[12pt, a4paper, reqno]{amsart}
\usepackage{amssymb,amsmath,amsthm}
\usepackage{a4wide}
\usepackage{color}
\usepackage{enumerate}
\usepackage{url}
\usepackage[mathscr]{eucal}
\usepackage{setspace}
\parskip=0.2cm

\newcommand{\sm}[4]{\left(\begin{smatrix}#1&#2\\ #3&#4 \end{smallmatrix} \right)}
\newtheorem{theorem}{Theorem}
\newtheorem{lemma}[theorem]{Lemma}
\newtheorem{corollary}[theorem]{Corollary}

\newtheorem{proposition}[theorem]{Proposition}
\newtheorem{definition}[theorem]{Definition}

\theoremstyle{remark}

\newtheorem*{remark}{Remark}

\numberwithin{theorem}{section} \numberwithin{equation}{section}

\newcommand{\Mp}{\text {\rm Mp}}

\newcommand{\R}{\mathbb{R}}
\newcommand{\C}{\mathbb{C}}

\newcommand{\Q}{\mathbb{Q}}

\newcommand{\Z}{\mathbb{Z}}

\newcommand{\SL}{{\text {\rm SL}}}

\newcommand{\sgn}{\operatorname{sgn}}
\newcommand{\PSL}{{\text {\rm PSL}}}
\DeclareMathOperator{\dlconst}{\mathfrak{d}}
\DeclareMathOperator{\Iso}{Iso}
\DeclareMathOperator{\SO}{SO}

\newcommand{\h}{\mathbb{H}}
\newcommand{\G}{\Gamma}
\newcommand{\dg}{\mathcal{D}} 
\newcommand{\dgdelta}{{\mathcal{D}(\Delta)}} 

\newcommand{\abs}[1]{\left\vert#1\right\vert}
\newcommand{\e}{\mathfrak{e}}

\newcommand{\smallabcd}{\left(\begin{smallmatrix}a & b \\ c & d\end{smallmatrix}\right)}
\newcommand{\Deltaover}[1]{\left(\frac{\Delta}{#1}\right)}
\newcommand{\kronecker}[2]{\left(\frac{#1}{#2}\right)}

\newcommand{\smallTmatrix}{\left(\begin{smallmatrix}1 & 1 \\ 0 & 1\end{smallmatrix}\right)}
\newcommand{\smallSmatrix}{\left(\begin{smallmatrix}0 & -1 \\ 1 & 0\end{smallmatrix}\right)}

\newcommand{\mt}{\mathbf{t}}

\begin{document}
\singlespacing
\title[]{Twisted traces of CM values of weak Maass forms}

\author{Claudia Alfes}
\email[C.~Alfes]{alfes@mathematik.tu-darmstadt.de}
\author{Stephan Ehlen}
\email[S.~Ehlen]{ehlen@mathematik.tu-darmstadt.de}
\address{Fachbereich Mathematik, Technische Universit\"{a}t Darmstadt, Schlossgartenstra\ss{}e 7, D--64289 Darmstadt, Germany}


\date{\today}
\begin{abstract}
We show that the twisted traces of CM values of weak Maass forms of weight 0 
are Fourier coefficients of vector valued weak Maass forms of 
weight 3/2. These results generalize work by Zagier on traces of singular moduli.
We utilize a twisted version of the theta lift 
considered by Bruinier and Funke \cite{BrFu06}.
\end{abstract}
\maketitle

\section{Introduction}

The values of the modular invariant $j(z)$ at quadratic irrationalities, classically called ``singular moduli'', are known to be algebraic integers. Their properties have been intensively studied since the 19th century. In an influential paper \cite{Zagier}, Zagier showed that the (twisted) traces of singular moduli are Fourier coefficients of weakly holomorphic modular forms of weight $3/2$. Recall that these are meromorphic modular forms which are holomorphic on the complex upper half plane 
$\h = \{ z \in \C;\ \Im(z) > 0\}$ with possible poles at the cusps.

Throughout this paper, we let $N$ be a positive integer and we denote by
$\G$ the congruence subgroup 
$\G=\Gamma_0(N)=\left\lbrace\smallabcd \in \SL_2(\Z); c \equiv 0 \bmod N \right\rbrace$.
For a negative integer $D$ congruent to a square modulo $4N$, we 
consider the set $\mathcal{Q}_{D,N}$ 
of \textit{positive definite} integral binary quadratic forms 
$\left[a,b,c\right]=ax^2+bxy+cy^2$ of discriminant $D=b^2-4ac$ 
such that $c$ is congruent to $0$ modulo $N$.
If $N=1$, we simply write $\mathcal{Q}_{D}$.
For each form $Q = \left[a,b,c\right] \in \mathcal{Q}_{D,N}$
there is an associated CM point $\alpha_Q=\frac{-b+i\sqrt{D}}{2a}$ 
in $\h$. The group $\G$ acts on $\mathcal{Q}_{D,N}$ with finitely many orbits.

Let $\Delta \in \Z$ be a fundamental discriminant (possibly 1) and $d$ 
a positive integer such that $-\sgn(\Delta)d$ and $\Delta$ are squares modulo $4N$. 
For a weakly holomorphic modular form $f$ of weight 0 for $\G$, 
we consider the modular trace function
\begin{equation}
 \mt_\Delta(f;d)=\frac{1}{\sqrt{\Delta}}\sum\limits_{Q\in\G\backslash\mathcal{Q}_{-d\abs{\Delta},N}}\frac{\chi_{\Delta}(Q)}{\abs{\overline\G_Q}}f(\alpha_Q).
\end{equation}
Here $\overline{\G}_Q$ denotes the stabilizer of $Q$ in $\overline{\G}$, the image of $\G$ in $\PSL_2(\Z)$.
The function $\chi_\Delta$ is a genus character, defined for 
$Q=[a,b,c] \in \mathcal{Q}_{-d\abs{\Delta},N}$ by
\begin{equation}\label{intro:chi}
 \chi_\Delta(Q)= \begin{cases}
				\kronecker{\Delta}{n}, &\text{ if } (a,b,c,\Delta)=1 \text{ and } Q \text{ represents } n \text{ with } (n,\Delta)=1,\\
				0, &\text{otherwise}.
			   \end{cases}
\end{equation}
It is known that $\chi_\Delta(Q)$ is $\G$-invariant \cite{GKZ}.
Note that for $\Delta=1$ we have $\chi_\Delta(Q)=1$ for all $Q \in \mathcal{Q}_{-d,N}$.

Let $J(z)=j(z)-744=q^{-1}+196884q+21493760q^2+\cdots$, $q:=e^{2\pi i z}$, be the normalized Hauptmodul for the group $\mathrm{PSL}_2(\Z)$. 
By the theory of complex multiplication it is known that $\mt_\Delta(J;d)$
is a rational integer \cite[Section 5.4]{ShimAuto}.

Zagier \cite[Theorem 6]{Zagier} proved that for $N=1$ and $\Delta > 0$ the ``generating series'' of these traces,
\begin{equation}\label{eq:intro1}
g_\Delta(\tau) = q^{-\Delta} - \sum\limits_{d\geq 0} \mt_\Delta(J;d) q^d,
\end{equation}
is a weakly holomorphic modular form of weight $3/2$ for $\G_0(4)$. Here, we set $\mt_\Delta(J;0)=2$, if $\Delta=1$ and $\mt_\Delta(J;0)=0$, otherwise.

Using Hecke operators, 
it is also possible to obtain from this
a formula for the traces of $J_m$, 
the unique weakly holomorphic modular function for $\SL_2(\Z)$ 
with principal part equal to $q^{-m}$, where $m$ is a positive integer.
Namely, we have
\begin{equation}\label{eq:intro2}
	\mt_\Delta(J_m;d) = \sum_{n \mid m} \kronecker{\Delta}{m/n} n\ \mt_\Delta(J;d n^2).
\end{equation}
Many authors \cite{BringmannOno, DukeJenkins, KimTwisted, MillerPixton}
worked on generalizations of these results, 
mostly also for modular curves of genus 0. 
Inspired by previous work \cite{Funke}, Bruinier and Funke \cite{BrFu06} 
showed that the function $g_1(\tau)$ can be interpreted
as a special case of a theta lift using a kernel function
constructed by Kudla and Millson \cite{KM86}.
They generalized Zagier's result for $\Delta=1$ to traces of 
harmonic weak Maass forms of weight $0$
on modular curves of \textit{arbitrary} genus \cite[Theorem 7.8]{BrFu06}.

The purpose of the present paper is to extend these results to the case $\Delta \neq 1$. 
We develop a systematic approach to twist vector valued modular forms
transforming with a certain Weil representation of $\Mp_2(\Z)$. 
From a representation--theoretic point of view, this can be interpreted
as an intertwining operator between two related representations. 
Then we define a twisted version of the theta lift studied by 
Bruinier and Funke and apply their results to obtain its Fourier expansion.

To illustrate our main result, Theorem \ref{thm:main}, we consider the following special case.
\begin{theorem}\label{thm:intro}
  Let $p$ be a prime or $p=1$ and let $f$ be a weakly holomorphic modular form of weight $0$ for $\G_0(p)$. Assume that $f$ is invariant under the Fricke involution $z \mapsto -\frac{1}{pz}$ and write $f(z)=\sum_{n\gg-\infty}a(n)q^n$ for its Fourier expansion.
 Let $\Delta > 1$ and, if $p \neq 1$, assume that $(\Delta,2p)=1$. Then the function
 \[
  \sum\limits_{m>0}m\sum\limits_{n>0}\left(\frac{\Delta}{n}\right) a(-mn)q^{-\abs{\Delta}m^2} - \sum\limits_{d>0}\mt_\Delta^*(f;d)q^d
 \]
is a weakly holomorphic modular form of weight $3/2$ for $\G_0(4p)$ contained in the Kohnen plus-space.
Here, $\mt_\Delta^*(f;d)=\mt_\Delta(f;d)/2$, for $p \neq 1$, and $\mt_\Delta^*(f;d)=\mt_\Delta(f;d)$,
for $p=1$. 
\end{theorem}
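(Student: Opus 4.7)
The plan is to deduce Theorem~\ref{thm:intro} as a consequence of the main theorem (Theorem~\ref{thm:main}), which computes the Fourier expansion of the twisted theta lift applied to a harmonic weak Maass form of weight $0$ on $\G_0(N)$. Specializing to $N=p$ (prime or $1$) and to a weakly holomorphic input $f$, the main theorem will produce a weakly holomorphic vector-valued modular form of weight $3/2$ for the Weil representation attached to the lattice $L$ underlying $\G_0(p)$. The scalar-valued form of Theorem~\ref{thm:intro} is then to be extracted from this vector-valued lift.

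First I would identify the discriminant group $L'/L$: for $N=p$ prime it is cyclic of order $2p$, and for $N=1$ it is trivial. For such Weil representations there is a classical isomorphism (Eichler--Zagier for $p=1$, and its extension by Skoruppa and Bruinier in general) between vector-valued modular forms on $\Mp_2(\Z)$ and scalar-valued forms on $\G_0(4p)$ lying in the Kohnen plus-space. The image is cut out precisely by the compatibility relations among the components of the vector-valued form, and the Fricke invariance of $f$ translates through the lift into exactly the symmetry $\varphi_h = \varphi_{-h}$ that forces the scalar image into the plus-space.

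Next I would read off the Fourier expansion of the lift. The main theorem should assert that the coefficient of $q^d$ in the component indexed by a class of norm $-d|\Delta|$ in $L'/L$ equals $\mt_\Delta(f;d)$, up to a normalization that becomes $\mt_\Delta^*(f;d)$ after unfolding from a vector-valued form to a scalar one, where the factor of $1/2$ for $p \ne 1$ reflects the pairing of the cosets $h$ and $-h$ in $L'/L$ that represent the same square class modulo $4Np$. The negative part of the expansion arises from rational isotropic lines in the quadratic space of $L$; under the assumption $(\Delta,2p)=1$ these lines decompose compatibly with the twist by $\chi_\Delta$, and the Bruinier--Funke formula for the singular terms yields the expression $\sum_{m>0} m \sum_{n>0} \left(\frac{\Delta}{n}\right) a(-mn) q^{-|\Delta|m^2}$.

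The main obstacle is the bookkeeping around the isotropic vectors: both the normalizations (passing from a vector-valued form to a plus-space form) and the combinatorics of the twist (matching a divisor sum of Fourier coefficients weighted by $\left(\frac{\Delta}{n}\right)$ with a sum of $\chi_\Delta$-values over rational isotropic lines in $L \otimes \Q$) require careful attention. The coprimality assumption $(\Delta,2p)=1$ is used at exactly this point, to ensure that $\chi_\Delta$ is multiplicative on the relevant parameter sets and that each lattice coset contributing to the principal part is hit by a single $\G$-orbit of isotropic vectors.
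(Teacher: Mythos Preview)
Your approach matches the paper's: specialize Theorem~\ref{thm:main} to $N=p$, pass from the vector-valued lift to the Kohnen plus-space via the Eichler--Zagier isomorphism $\sum_h f_h\e_h \mapsto \sum_h f_h$, and use Proposition~\ref{prop:tf-} (together with Remark~\ref{rem:explform}) to compute the principal part explicitly.

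One point is misattributed. The plus-space condition is automatic from the Eichler--Zagier isomorphism for the Weil representation on $L'/L\simeq\Z/2p\Z$; it has nothing to do with Fricke invariance of $f$. Fricke invariance is instead used in the principal-part computation: $\G_0(p)$ has two cusps $\infty$ and $0$, interchanged by the Fricke involution, and the hypothesis ensures that the Fourier coefficients $a^+_\ell(n)$ at both cusps coincide, so the two contributions in Proposition~\ref{prop:tf-} collapse into a single sum over $n$. Without this you would get a formula involving the expansions of $f$ at both cusps separately. Likewise, the factor $1/2$ in $\mt_\Delta^*$ arises because the paper's trace $\mt_{\Delta,r}(f;h,m)$ sums over \emph{both} positive and negative definite forms in $L_{rh,\abs{\Delta}m}$, and for $\Delta>0$ one has $\chi_\Delta(-Q)=\chi_\Delta(Q)$; this doubles the introduction's trace over positive definite forms only. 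The condition $(\Delta,2p)=1$ is used to guarantee that $r$ is a unit in $\Z/2p\Z$, so that the sum $\sum_h I_{\Delta,r,h}$ is independent of the choice of $r$.
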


\begin{remark}
 Note that in contrast to the case $\Delta=1$ \cite[Theorem 1.1]{BrFu06}
 we do not get a constant term here and the lift is always weakly holomorphic.
 Setting $f=J_m$ and $p=1$, we also obtain the forms $g_\Delta(\tau)$ and
 the relation \eqref{eq:intro2} from Theorem \ref{thm:intro}.
 
 Furthermore, note that our main theorem is also valid for $\Delta<0$.
\end{remark}

The paper is organized as follows. In Section 2 we review necessary background material on quadratic spaces and vector valued automorphic forms. In Section 3 we show that twisting can be regarded as an intertwining operation. Then we define a twisted Kudla-Millson theta kernel and in Section 5 we prove our main theorem. In Section 6 we compute the twisted theta lift for other types of automorphic forms following the examples studied by Bruinier and Funke. 
In particular, we show that a twisted intersection pairing \`a la Kudla, Rapoport, and Yang \cite{KRY} 
is given in terms of a weight $3/2$ Eisenstein series. 
We also explain how to deduce Theorem \ref{thm:intro}, and present a few computational examples.

\section*{Acknowledgments}
We would like to thank Jan Bruinier for suggesting this project and for many
valuable discussions. We thank Jens Funke for substantially improving the
exposition of the proof of our main theorem. 
Moreover, we would like to thank Martin H\"ovel and
Fredrik Str\"omberg for helpful comments on earlier versions of this paper.

\section{Preliminaries}

For a positive integer $N$ we consider the rational quadratic space of signature $(1,2)$ given by
\[
V:=\left\{\lambda=\begin{pmatrix} \lambda_1 &\lambda_2\\\lambda_3& -\lambda_1\end{pmatrix}; \lambda_1,\lambda_2,\lambda_3 \in \Q\right\}
\]
and the quadratic form $Q(\lambda):=N\text{det}(\lambda)$.
The corresponding bilinear form is $(\lambda,\mu)=-N\text{tr}(\lambda \mu)$ for $\lambda, \mu \in V$.

Let $G=\mathrm{Spin}(V) \simeq \SL_2$, viewed as an algebraic group over $\Q$
and write $\overline\G$ for its image in $\mathrm{SO}(V)\simeq\mathrm{PSL}_2$.
Let $D$ be the associated symmetric space realized as the Grassmannian of lines 
in $V(\R)$ on which the quadratic form $Q$ is positive definite,
\[
D \simeq \left\{z\subset V(\R);\ \text{dim}z=1 \text{ and } Q\vert_{z} >0 \right\}.
\]
In this setting the group $\SL_2(\Q)$ acts on $V$ by conjugation
\[
  g.\lambda :=g \lambda g^{-1},
\]
for $\lambda \in V$ and $g\in\SL_2(\Q)$. In particular, $G(\Q)\simeq\SL_2(\Q)$.

If we identify the symmetric space $D$ with the complex upper half plane $\h$ in the usual way,
we obtain an isomorphism
between $\h$ and $D$ by
\[
 z \mapsto \R \lambda(z),
 \]
where, for $z=x+iy$, we pick as a generator for the associated positive line
 \[
 \lambda(z):=\frac{1}{\sqrt{N}y} \begin{pmatrix} -(z+\bar{z})/2 &z\bar{z} \\  -1 & (z+\bar{z})/2 \end{pmatrix}.
 \]
The group $G$ acts on $\h$ by linear fractional transformations and
the isomorphism above is $G$-equivariant.
In particular, $Q\left(\lambda(z)\right)=1$ and $g.\lambda(z)=\lambda(gz)$ for $g\in G(\R)$.
Let $(\lambda,\lambda)_z=(\lambda,\lambda(z))^2-(\lambda,\lambda)$. This is the minimal majorant of $(\cdot,\cdot)$ associated with $z\in D$.

We can view $\G=\G_0(N)$ as a discrete subgroup of $\mathrm{Spin}(V)$. 
Write $M=\G \setminus D$ for the attached locally symmetric space.

The set of isotropic lines $\mathrm{Iso}(V)$ in $V(\Q)$ can be identified
with $P^1(\Q)=\Q \cup \left\{ \infty\right\}$ via
\[
\psi: P^1(\Q) \rightarrow \mathrm{Iso}(V), \quad \psi((\alpha:\beta))
 = \mathrm{span}\left(\begin{pmatrix} \alpha\beta &\alpha^2 \\  -\beta^2 & -\alpha\beta \end{pmatrix}\right).
\]
The map $\psi$ is a bijection and $\psi(g(\alpha:\beta))=g.\psi((\alpha:\beta))$. 
So the cusps of $M$ (i.e. the $\G$-classes of $P^1(\Q)$) can be identified with the $\G$-classes of $\mathrm{Iso}(V)$.

If we set $\ell_\infty := \psi(\infty)$, then $\ell_\infty$ is spanned by 
$\lambda_\infty=\left(\begin{smallmatrix}0 & 1 \\ 0 & 0\end{smallmatrix}\right)$. 
For $\ell \in \mathrm{Iso}(V)$ we pick $\sigma_{\ell} \in\SL_2(\Z)$ 
such that $\sigma_{\ell}.\ell_\infty=\ell$. 
Furthermore, we orient all lines $\ell$ by requiring that 
$\lambda_\ell:=\sigma_{\ell}\lambda_\infty$ is a positively oriented 
basis vector of $\ell$.
Let $\Gamma_{\ell}$ be the stabilizer of the line $\ell$. 
Then (if $-I\in \Gamma$)
\[
\sigma_{\ell} ^{-1}\Gamma_{\ell} \sigma_{\ell} 
= \left\{\pm \begin{pmatrix} 1 &k\alpha_{\ell} \\   & 1 \end{pmatrix}; k\in\Z \right\},
\]
where $\alpha_{\ell} \in \Q_{>0}$ is the width of the cusp $\ell$ \cite{Funke}. 
In our case it does not depend on the choice of $\sigma_{\ell}$. 
For each $\ell$ there is a $\beta_{\ell} \in \Q_{>0}$ such that 
$\left(\begin{smallmatrix}0 &  \beta_{\ell} \\ 0 & 0\end{smallmatrix}\right)$ 
is a primitive element of $\ell_\infty \cap\sigma_{\ell}L$.
We write $\epsilon_{\ell}  = \alpha_{\ell} /\beta_{\ell}$.

Now Heegner points are given as follows.
For $\lambda\in V(\Q)$ with $Q(\lambda)>0$ let
\[
D_{\lambda}= \mathrm{span}(\lambda) \in D.
\]
For $Q(\lambda) \leq 0$ we set $D_{\lambda}=\emptyset$.
We denote be the image of $D_{\lambda}$ in $M$ by $Z(\lambda)$.

If $Q(\lambda)<0$, we obtain a geodesic $c_{\lambda}$ in $D$ via
\[
c_{\lambda}=\left\{z \in D; z \perp \lambda \right\}.
\]
We denote $\Gamma_{\lambda} \backslash c_{\lambda}$ in $M$ by $c(\lambda)$.
The stabilizer $\overline\G_\lambda$ is either trivial or infinite cyclic.
The geodesic $c(\lambda)$ is infinite if and only if the following equivalent conditions hold \cite{Funke}.
 \begin{enumerate}
	 \item We have $Q(\lambda)\ \in\ -N(\Q^{\times})^2$.
	 \item The stabilizer $\overline\G_\lambda$ is trivial.
	 \item The orthogonal complement $\lambda^{\perp}$ is split over $\Q$.
 \end{enumerate}
Thus if $c(\lambda)$ is an infinite geodesic, $\lambda$ is orthogonal 
to two isotropic lines $\ell_\lambda=\text{span}(\mu)$ 
and $\tilde{\ell}_\lambda=\text{span}(\tilde{\mu})$, with $\mu$ and $\tilde{\mu}$ positively oriented.
We fix an orientation of $V$ and we say that $\ell_\lambda$ is 
the line associated with $\lambda$ if the triple $(\lambda,\mu,\tilde{\mu})$ 
is a positively oriented basis for $V$. In this case, we write $\lambda \sim \ell_\lambda$.

\subsection{A lattice related to $\G_0(N)$}

Following Bruinier and Ono \cite{BrOno}, we consider the lattice
\[
 L:=\left\{ \begin{pmatrix} b& -a/N \\ c&-b \end{pmatrix}; \quad a,b,c\in\Z \right\}.
\]
The dual lattice corresponding to the bilinear form $(\cdot,\cdot)$ is given by
\[
 L':=\left\{ \begin{pmatrix} b/2N& -a/N \\ c&-b/2N \end{pmatrix}; \quad a,b,c\in\Z \right\}.
\]
We identify the discriminant group $L'/L=:\dg$ 
with $\Z/2N\Z$, together with the $\Q/\Z$ valued quadratic form ${x \mapsto -x^2/4N}$.
The level of $L$ is $4N$. We note that Bruinier and Ono \cite{BrOno} consider the same lattice 
together with the quadratic form $-Q$.

For a fundamental discriminant $\Delta\in\Z$ we also consider the rescaled lattice $\Delta L$ together with the quadratic form $Q_\Delta(\lambda):=\frac{Q(\lambda)}{\abs{\Delta}}$. The corresponding bilinear form is given by $(\cdot,\cdot)_\Delta = \frac{1}{\abs{\Delta}} (\cdot,\cdot)$. The dual lattice of $\Delta L$ corresponding to $(\cdot,\cdot)_\Delta$ is equal to $L'$ as above, independent of $\Delta$. We denote the discriminant group $L'/\Delta L$ by $\dgdelta$. Note that $\dg(1)=\dg$ and $\abs{\dgdelta}=\abs{\Delta}^3 \abs{\dg} = 2N\, \abs{\Delta}^3$.

Note that $\G_0(N) \subset \mathrm{Spin}(L)$ is a congruence subgroup of $\mathrm{Spin}(L)$ which takes $L$ to itself and acts trivially on the discriminant group $\dg$. However, in general it does not act trivially on $\dgdelta$.

For $m \in \Q$ and $h \in \dg$, we will also consider the set
\begin{equation}
 L_{h,m}  = \left\{ \lambda \in L+h; Q(\lambda)=m  \right\}.
\end{equation}
By reduction theory, if $m \neq 0$ the group $\Gamma$ acts on $ L_{h,m}$ with finitely many orbits.


\subsection{The Weil representation and vector valued automorphic forms}
We denote by $\Mp_2(\Z)$ the integral metaplectic group, which consists of pairs
$(\gamma, \phi)$, where $\gamma = {\smallabcd \in \SL_2(\Z)}$ and $\phi:\h\rightarrow \C$
is a holomorphic function with $\phi^2(\tau)=c\tau+d$.
The group $\Mp_2(\Z)$ is generated by $S=(\smallSmatrix,\sqrt{\tau})$ and $T=(\smallTmatrix, 1)$.
We consider the Weil representation $\rho_\Delta$ of $\Mp_2(\Z)$ 
corresponding to the discriminant group $\dgdelta$ on the group ring $\C[\dgdelta]$,
equipped with the standard scalar product $\langle \cdot , \cdot \rangle$, conjugate-linear
in the second variable. We simply write $\rho$ for $\rho_1$.

For $\delta \in \dgdelta$, we write $\e_\delta$ 
for the corresponding standard basis element of $\C[\dgdelta]$.
The action of $\rho_\Delta$ on basis vectors of $\C[\dgdelta]$ 
can be described in terms of the following formulas \cite{BrHabil} for 
the generators $S$ and $T$ of $\Mp_2(\Z)$. In our special case we have
\begin{equation*}
 \rho_\Delta(T) \e_\delta = e(Q_\Delta(\delta)) \e_\delta,
\end{equation*}
and
\begin{equation*}
 \rho_\Delta(S) \e_\delta = \frac{\sqrt{i}}{\sqrt{\abs{\dgdelta}}}
				\sum_{\delta' \in \dgdelta} e(-(\delta',\delta)_\Delta) \e_{\delta'}.
\end{equation*}
Let $k \in \frac{1}{2}\Z$, and let $A_{k,\rho_\Delta}$ be the vector space of functions 
$f: \h \rightarrow \C[\dgdelta]$, such that for $(\gamma,\phi) \in \Mp_2(\Z)$ we have 
\begin{equation}
	f(\gamma \tau) = \phi(\tau)^{2k} \rho_\Delta(\gamma, \phi) f(\tau).
\end{equation}
A twice continuously differentiable function $f\in A_{k,\rho_\Delta}$ 
is called a \textit{(harmonic) weak Maass form of weight $k$ with respect to the representation $\rho_\Delta$} 
if it satisfies in addition:
\begin{enumerate}[(i)]
 \item $\Delta_k f=0$,
 \item there is a $C>0$ such that $f(\tau)=O(e^{Cv}) $ as $v \rightarrow \infty$.
 \end{enumerate}
Here and throughout, we write $\tau=u+iv$ with $u,v \in \R$ and
$\Delta_k=-v^2\left(\frac{\partial^2}{\partial u^2}+\frac{\partial^2}{\partial v^2}\right)
+ikv\left(\frac{\partial}{\partial u}+i\frac{\partial}{\partial v}\right)$ 
is the weight $k$ Laplace operator.
We denote the space of such functions by $H_{k,\rho_\Delta}$. 
Moreover, we let $H^{+}_{k,\rho_\Delta}$ be the subspace of functions in $H_{k,\rho_\Delta}$ whose singularity at $\infty$ is locally given by the pole of a meromorphic function.
By $M^{\text{!}}_{k,\rho_\Delta} \subset H^{+}_{k,\rho_\Delta}$ 
we denote the subspace of weakly holomorphic modular forms.
 
Similarly, we can define scalar valued analogues of these spaces of automorphic forms.
In those cases, we require analogous conditions at all cusps of $\G$ in $(ii)$.
We denote the corresponding spaces by $H^{+}_{k}(\G)$ and $M_k^{\text{!}}(\G)$.

Note that the Fourier expansion of any harmonic weak Maass form uniquely decomposes into a holomorphic and a non-holomorphic part \cite[Section 3]{BrFu04}. For example, for $f\in H^{+}_0(\G)$ we have
\begin{equation}\label{eq:fourierhmf0}
f(\sigma_\ell  \tau) = \sum_{\substack{n\in \frac{1}{\alpha_{\ell}}\Z\\n\gg -\infty}} a^{+}_{\ell}(n)e(n \tau)
			  + \sum_{\substack{n\in \frac{1}{\alpha_{\ell}}\Z\\n<0}} a^{-}_{\ell}(n)e(n\bar{\tau}),
\end{equation}
where $\alpha_{\ell}$ denotes the width of the cusp $\ell$ and the first summand is called the holomorphic part of $f$, the second one the non-holomorphic part.

%
%

\section{Twisting vector valued modular forms}\label{sec:twisting}
We now define a generalized genus character for 
$\delta = \left(\begin{smallmatrix} b/2N& -a/N \\ c&-b/2N \end{smallmatrix}\right) \in L'$. 
Let $\Delta\in\Z$ be a fundamental discriminant and $r\in\Z$ such that $\Delta \equiv r^2 \ (\text{mod } 4N)$.

We let
\begin{equation}\label{def:chidelta}
\chi_{\Delta}(\delta)=\chi_{\Delta}(\left[a,b,Nc\right]):=
\begin{cases}
\Deltaover{n}, & \text{if } \Delta | b^2-4Nac \text{ and } (b^2-4Nac)/\Delta \text{ is a}
\\
& \text{square modulo } 4N \text{ and } \gcd(a,b,c,\Delta)=1,
\\
0, &\text{otherwise}.
\end{cases}
\end{equation}
Here, $\left[a,b,Nc\right]$ is the integral binary quadratic form 
corresponding to $\delta$, and $n$ is any integer prime to $\Delta$ represented by $\left[a,b,Nc\right]$.

The function $\chi_{\Delta}$ is invariant under the action of $\G_0(N)$ and under the action of all Atkin-Lehner involutions.
It can be computed by the following formula \cite[Section I.2, Proposition 1]{GKZ}: If $\Delta=\Delta_1\Delta_2$ is a factorization of $\Delta$ into discriminants and $N=N_1N_2$ is a factorization of $N$ into positive factors such that $(\Delta_1,N_1a)=(\Delta_2,N_2c)=1$, then
\begin{equation}\label{def:chi_Delta}
 \chi_{\Delta}(\left[a,b,Nc\right])=\left(\frac{\Delta_1}{N_1a}\right)\left(\frac{
\Delta_2}{N_2c}\right).
\end{equation}

If no such factorizations of $\Delta$ and $N$ exist, we have $\chi_{\Delta}(\left[a,b,Nc\right])=0$.

We note that $\chi_{\Delta}(\delta)$ depends only on $\delta \in L'$ modulo $\Delta L$. 
Therefore, we can view it as a function on the discriminant group $\dgdelta$.

Using the function $\chi_\Delta$ we obtain an intertwiner of the Weil representations
corresponding to $\dg$ and $\dgdelta$ as follows. 

\begin{definition}
 Denote by $\pi: \dgdelta \rightarrow \dg$ the canonical projection.
 For $h \in \dg$, we define
 \begin{equation}
  \psi_{\Delta,r}(\e_h) := \sum_{\substack{\delta \in \dgdelta \\ \pi(\delta)=rh \\ Q_\Delta(\delta) \equiv \sgn(\Delta) Q(h)\, (\Z)}} \chi_\Delta(\delta) \e_\delta.
 \end{equation}
\end{definition}

\begin{proposition} \label{prop:intertwiner}
 Let $\Delta \in \Z$ be a discriminant and $r \in \Z$ such that $\Delta \equiv r^2\ (4N)$. Then the map $\psi_{\Delta,r}: \dg \rightarrow \dgdelta$ defines an intertwining linear map between the representations $\widetilde{\rho}$ and $\rho_\Delta$, where $\widetilde{\rho} = \rho$ if $\Delta>0$ and $\widetilde{\rho}=\bar\rho$ if $\Delta<0$.
\end{proposition}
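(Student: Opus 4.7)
The plan is to exploit that $\psi_{\Delta,r}$ is $\C$-linear and that $\Mp_2(\Z)$ is generated by $S$ and $T$. Consequently the asserted identity
\[
\rho_\Delta(\gamma,\phi)\circ\psi_{\Delta,r} \;=\; \psi_{\Delta,r}\circ\widetilde\rho(\gamma,\phi)
\]
need only be checked on these two generators.

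For $T=(\smallTmatrix,1)$, the explicit formulas recalled in Section~2 give $\widetilde\rho(T)\e_h=e(\sgn(\Delta)Q(h))\e_h$ (taking complex conjugation into account when $\Delta<0$) and $\rho_\Delta(T)\e_\delta=e(Q_\Delta(\delta))\e_\delta$. The congruence condition $Q_\Delta(\delta)\equiv\sgn(\Delta)Q(h)\pmod{\Z}$ built into the sum defining $\psi_{\Delta,r}(\e_h)$ is precisely what forces both sides to equal $e(\sgn(\Delta)Q(h))\,\psi_{\Delta,r}(\e_h)$, so the $T$-case is immediate.

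The $S$-case carries the substance. Inserting the formulas for $\widetilde\rho(S)$ and $\rho_\Delta(S)$, extracting the coefficient of a generic $\e_{\delta'}\in\C[\dgdelta]$, and cancelling the common $\sqrt{i}$-factor, one is reduced to verifying a twisted Gauss-sum identity essentially of the form
\[
\sum_{\substack{\delta\in\dgdelta\\ \pi(\delta)=rh,\ Q_\Delta(\delta)\equiv\sgn(\Delta)Q(h)}}\!\!\chi_\Delta(\delta)\,e\bigl(-(\delta',\delta)_\Delta\bigr)\;=\;\frac{\varepsilon(\Delta)\,\chi_\Delta(\delta')}{|\Delta|^{3/2}}\sum_{\substack{h'\in\dg\\ \delta'\text{ occurs in }\psi_{\Delta,r}(\e_{h'})}}\!e\bigl(-\sgn(\Delta)(h',h)\bigr),
\]
with $\varepsilon(\Delta)$ an eighth root of unity accounting for the ratio $\sqrt{|\dgdelta|/|\dg|}=|\Delta|^{3/2}$. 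My approach would be to parametrise $\delta\in\dgdelta$ by the pair (its projection $\pi(\delta)\in\dg$, its translate in $\ker(\pi)\subset\dgdelta$), use the multiplicative formula \eqref{def:chi_Delta} to split $\chi_\Delta$ as a product of Kronecker symbols over the prime-power divisors of $\Delta$, and evaluate the resulting twisted exponential sums as a product of classical quadratic Gauss sums indexed by those primes. Each such Gauss sum is known in closed form (Landsberg-Schaar), and their product supplies exactly the factor $|\Delta|^{3/2}$ and the eighth root of unity.

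The main obstacle is this Gauss-sum bookkeeping. The normalization $|\dgdelta|/|\dg|=|\Delta|^3$ appearing in the Weil-representation prefactors must cancel the $|\Delta|^{3/2}$ coming out of the Gauss sum exactly, and the sign of the Kronecker-symbol Gauss sum (which is real and positive for $\Delta>0$ but is a multiple of $i$ for $\Delta<0$, by quadratic reciprocity) is what forces the dichotomy $\widetilde\rho=\rho$ versus $\widetilde\rho=\bar\rho$: it is precisely the extra factor of $i$ produced for negative discriminants that must be absorbed by passing from $\rho$ to its complex conjugate representation. Once these signs and prefactors line up, the intertwining identity for $S$ drops out and the proof is complete.
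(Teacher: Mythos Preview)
Your approach is essentially the same as the paper's: both check the intertwining relation on the generators $T$ and $S$, observe that the $T$-case is immediate from the congruence condition in the definition of $\psi_{\Delta,r}$, and reduce the $S$-case to the twisted exponential-sum identity you display. The only difference is that the paper does not prove this identity but simply quotes it as Proposition~4.2 of Bruinier--Ono \cite{BrOno}, whereas you sketch a direct evaluation via the multiplicativity of $\chi_\Delta$ and classical quadratic Gauss sums.

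Two small points on your sketch. First, your $\varepsilon(\Delta)$ is not merely ``an eighth root of unity'': in the paper's normalisation it is exactly $1$ for $\Delta>0$ and $i$ for $\Delta<0$, and this precise value is what makes the passage from $\rho$ to $\bar\rho$ come out correctly. Second, while your plan (parametrise $\delta$ by its image under $\pi$ and a translate in $\ker\pi$, factor $\chi_\Delta$ over prime discriminants, and evaluate the resulting Gauss sums) is sound and is in fact how the Bruinier--Ono identity is established, the bookkeeping you allude to is genuinely nontrivial; if you intend a self-contained proof you should either carry it out or cite \cite{BrOno} as the paper does.
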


\begin{proof}
 For $T \in \Mp_2(\Z)$ this is trivial. For the generator $S$, this follows from Proposition 4.2 in \cite{BrOno}; namely, we have the following identity of exponential sums
\begin{equation}
 \sum_{\substack{\delta\in \dgdelta\\ \pi(\delta) = rh \\Q_\Delta(\delta)\equiv\sgn(\Delta) Q(h)\, (\Z)}} \hspace{-9mm}\chi_{\Delta}(\delta)\ e\left(\frac{(\delta,\delta')}{\abs{\Delta}}\right)
 = 
\epsilon \left| \Delta \right|^{3/2} \chi_{\Delta} (\delta')\ \hspace{-9mm}\sum\limits_{\substack{h'\in \dg\\ \pi(\delta')= rh' \\Q_\Delta(\delta')\equiv\sgn(\Delta) Q(h')\, (\Z)}}\hspace{-9mm} e\left(\sgn(\Delta)(h,h')\right),
\end{equation}
where $\epsilon = 1$ if $\Delta>0$ and $\epsilon=i$ if $\Delta<0$.
\end{proof}
This, together with the unitarity of the Weil representation, directly implies the following
\begin{corollary} \label{cor:twisting_mf}
 Let $f \in A_{k,\rho_\Delta}$. Then the function $g: \h \rightarrow \C[\dg]$, ${g=\sum_{h \in \dg} g_h \e_h}$
 with $  {g_h := \left\langle \psi_{\Delta,r}(\e_h), f \right\rangle}$, 
is contained in $A_{k,\widetilde{\rho}}$.
\end{corollary}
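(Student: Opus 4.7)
The plan is to recast the coefficient-extraction defining $g$ as evaluation of a single fixed linear map, and then reduce the statement to the intertwining property already proved in Proposition \ref{prop:intertwiner}. Concretely, I would introduce
\[
\Psi \colon \C[\dgdelta] \longrightarrow \C[\dg], \qquad \Psi(v) \;:=\; \sum_{h \in \dg} \langle \psi_{\Delta,r}(\e_h), v \rangle\, \e_h,
\]
so that by construction $g(\tau) = \Psi(f(\tau))$. The corollary then amounts to showing that $\Psi$ intertwines $\rho_\Delta$ with $\widetilde\rho$.

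First I would observe that, with respect to the standard scalar products, $\Psi$ is the adjoint $\psi_{\Delta,r}^{\ast}$ of the map from Proposition \ref{prop:intertwiner}. Both Weil representations $\widetilde\rho$ on $\C[\dg]$ and $\rho_\Delta$ on $\C[\dgdelta]$ are unitary, so for every $(\gamma,\phi) \in \Mp_2(\Z)$ one has $\rho_\Delta(\gamma,\phi)^{\ast} = \rho_\Delta(\gamma,\phi)^{-1}$ and similarly for $\widetilde\rho$. Taking adjoints of the intertwining identity $\rho_\Delta(\gamma,\phi)\circ\psi_{\Delta,r} = \psi_{\Delta,r}\circ\widetilde\rho(\gamma,\phi)$ and subsequently replacing $(\gamma,\phi)$ by its inverse in $\Mp_2(\Z)$ yields
\[
\Psi \circ \rho_\Delta(\gamma,\phi) \;=\; \widetilde\rho(\gamma,\phi) \circ \Psi
\qquad \text{for all } (\gamma,\phi) \in \Mp_2(\Z),
\]
which is the required intertwining in the opposite direction.

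The transformation law for $g$ then drops out in a single line: for any $(\gamma,\phi) \in \Mp_2(\Z)$,
\[
g(\gamma\tau) \;=\; \Psi\bigl(f(\gamma\tau)\bigr) \;=\; \Psi\bigl(\phi(\tau)^{2k}\rho_\Delta(\gamma,\phi) f(\tau)\bigr) \;=\; \phi(\tau)^{2k}\,\widetilde\rho(\gamma,\phi)\,\Psi(f(\tau)) \;=\; \phi(\tau)^{2k}\,\widetilde\rho(\gamma,\phi)\,g(\tau),
\]
which is precisely the transformation law defining $A_{k,\widetilde\rho}$. All regularity properties required for membership in $A_{k,\widetilde\rho}$ descend automatically from $f$ to $g$, since $\Psi$ is a fixed finite-dimensional linear map.

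The one point that really needs checking is the second equality in the displayed computation: pulling the scalar $\phi(\tau)^{2k}$ through $\Psi$. Since the scalar product on $\C[\dgdelta]$ is conjugate-linear in its second variable, one must verify that this does not introduce an unwanted $\overline{\phi(\tau)^{2k}}$. This is ensured by the fact that $\psi_{\Delta,r}(\e_h)$ has real (indeed $\{-1,0,1\}$-valued) coefficients, so that the functional $\langle \psi_{\Delta,r}(\e_h),\cdot\rangle$ can equivalently be read off as a $\C$-linear pairing; equivalently, the whole argument can be phrased using the underlying bilinear pairing on $\C[\dgdelta]$. Once this convention check is in place, the corollary is a purely formal consequence of Proposition \ref{prop:intertwiner} and the unitarity of the Weil representation, and no further calculation is needed.
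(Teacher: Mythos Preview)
Your approach is exactly the paper's: the corollary is presented there as an immediate consequence of Proposition~\ref{prop:intertwiner} together with the unitarity of the Weil representation, and you have simply spelled out that deduction via the adjoint map $\Psi=\psi_{\Delta,r}^{\ast}$.

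One correction on the conjugation issue you flag at the end. The fact that $\psi_{\Delta,r}(\e_h)$ has real coefficients does \emph{not} make the functional $v\mapsto\langle\psi_{\Delta,r}(\e_h),v\rangle$ $\C$-linear: with the paper's convention (conjugate-linear in the second variable) this functional is antilinear in $v$ regardless of what sits in the first slot, so an unwanted $\overline{\phi(\tau)^{2k}}$ really would appear. What real coefficients do buy you is the identity $\langle\psi_{\Delta,r}(\e_h),v\rangle=\overline{[\psi_{\Delta,r}(\e_h),v]}$, where $[\cdot,\cdot]$ denotes the $\C$-bilinear pairing. Hence your second suggestion is the correct resolution: read the definition of $g_h$ via the bilinear pairing --- which is precisely what the paper does in its actual application, equation~\eqref{eq:twtheta}, where the conjugate $\overline{\Theta_{\dgdelta}}$ is placed in the second slot --- and then your displayed chain of equalities goes through verbatim with no stray conjugate. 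With that reading the argument is complete.
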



\section{The twisted Kudla-Millson theta function}\label{theta}
We let $\delta \in \dgdelta$ and define the 
same theta function $\Theta_{\delta}(\tau,z)$ for 
$\tau, z \in \h$ 
as Bruinier and Funke \cite{BrFu06}, here for the lattice $\Delta L$ with the quadratic form $Q_\Delta$.
It is constructed using the Schwartz function
\[
 \varphi^0_{\Delta}(\lambda,z)
 =\left(\frac{1}{\abs{\Delta}}(\lambda,\lambda(z))^2-\frac{1}{2\pi} \right) e^{-2\pi R(\lambda,z)/|\Delta|}\omega,
\]
where 
$R(\lambda,z):=\frac{1}{2}(\lambda,\lambda(z))^2-(\lambda,\lambda)$ 
and $\omega = \frac{i}{2}\frac{dz \wedge d\bar{z}}{y^2}$.

Then let $\varphi(\lambda,\tau,z)=e^{2\pi i Q_\Delta(\lambda)\tau} \varphi^0_{\Delta}(\sqrt{v}\lambda,z)$ and define
\begin{equation}\label{ThetaDeltaOp}
	\Theta_{\delta}(\tau,z,\varphi)=\sum\limits_{\lambda\in \Delta L+\delta} \varphi (\lambda,\tau, z).
\end{equation}

Since we are only considering the fixed Schwartz function $\varphi$ above, 
we will frequently drop the argument $\varphi$ and simply write $\Theta_{\delta}(\tau,z)$.

The Schwartz function $\varphi$ has been constructed by Kudla and Millson \cite{KM86}.
It has the crucial property that
for $Q(\lambda)>0$ it is a Poincar\'e dual form for the Heegner point $D_\lambda$,
while it is exact for $Q(\lambda)<0$.

The vector valued theta series
\[
\Theta_{\dgdelta}(\tau,z)=\sum\limits_{\delta\in \dgdelta} \Theta_{\delta}(\tau,z) \mathfrak{e}_{\delta}.
\]
is a $C^{\infty}$-automorphic form of weight $3/2$ which transforms with respect to the representation $\rho_\Delta$ \cite{BrFu06}.

Following Bruinier and Ono \cite{BrOno} we also define a twisted theta function.
For $h \in \dg$ the corresponding component is defined as
\begin{equation}\label{eq:twtheta}
 \Theta_{\Delta,r,h}(\tau,z) = \left\langle \psi_{\Delta,r}(\e_h), \overline{\Theta_{\dgdelta}(\tau,z)} \right\rangle =
  \sum\limits_{\substack{\delta\in \dgdelta\\ \pi(\delta) = rh \\Q_\Delta(\delta)\equiv\sgn(\Delta)Q(h)\, (\Z)}} \chi_{\Delta}(\delta)\Theta_{\delta}(\tau,z).
\end{equation}

Using this, we obtain a $\C[\dg]$-valued theta function by setting
\begin{equation}
	\Theta_{\Delta,r}(\tau,z) := \sum_{h \in \dg} \Theta_{\Delta,r,h}(\tau,z) \e_h.
\end{equation}
\begin{remark}
Note that Bruinier and Ono actually introduce their twisted Siegel theta function in a more direct way. 
However, our interpretation makes it possible to apply this ``method of twisting'' 
directly to other modular forms and theta kernels.

\end{remark}
By Proposition \ref{cor:twisting_mf} we obtain the following transformation formula for $\Theta_{\Delta,r}(\tau,z)$.
\begin{proposition} \label{prop:twthetatrans}
The theta function $\Theta_{\Delta,r}(\tau,z)$ is a non-holomorphic $\C[\dg]$-valued modular form of weight
$3/2$ for the representation $\widetilde{\rho}$. 
Furthermore, it is a non-holomorphic automorphic form of weight 0 for $\G_0(N)$ in the variable $z \in D$.
\end{proposition}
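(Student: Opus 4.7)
The plan is to establish the two automorphy statements separately, in each case transferring a known property of the untwisted vector-valued theta function $\Theta_{\dgdelta}(\tau,z)$ through the intertwiner $\psi_{\Delta,r}$ from Proposition \ref{prop:intertwiner}. Since $\Theta_{\dgdelta}$ is already stated to be $C^\infty$, of weight $3/2$ in $\tau$ for $\rho_\Delta$, and of weight $0$ in $z$ for $\G_0(N)$, the task reduces to checking that the twisting operation preserves both structures.

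For the $\tau$-transformation, the idea is to mimic the argument underlying Corollary \ref{cor:twisting_mf}, applied fibrewise in $z$. Writing $\psi_{\Delta,r}(\e_h) = \sum_\delta c^h_\delta \e_\delta$ with $c^h_\delta \in \R$ (so that the pairing against $\overline{\Theta_{\dgdelta}}$ yields $\Theta_{\Delta,r,h} = \sum_\delta c^h_\delta\, \Theta_\delta$ via conjugate-linearity of $\langle\cdot,\cdot\rangle$ in the second slot), the component-wise transformation calculation reduces to the matrix identity $\sum_\delta c^h_\delta\, \rho_\Delta(\gamma,\phi)_{\delta,\delta'} = \sum_{h'} \widetilde{\rho}(\gamma,\phi)_{h,h'}\, c^{h'}_{\delta'}$. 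This is the transposed form of the intertwining relation $\rho_\Delta(\gamma,\phi)\psi_{\Delta,r} = \psi_{\Delta,r}\widetilde{\rho}(\gamma,\phi)$ from Proposition \ref{prop:intertwiner}, and it follows from that relation applied to $(\gamma,\phi)^{-1}$ combined with entrywise complex conjugation, using unitarity of both Weil representations and the realness of $\psi_{\Delta,r}$. Assembling the sums then yields $\Theta_{\Delta,r}(\gamma\tau,z) = \phi(\tau)^3\widetilde{\rho}(\gamma,\phi)\Theta_{\Delta,r}(\tau,z)$.

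For the $z$-invariance I plan to first establish the standard identity $\Theta_\delta(\tau,\gamma z) = \Theta_{\gamma^{-1}\delta}(\tau,z)$ for $\gamma \in \G_0(N)$, obtained by substituting $\lambda \mapsto \gamma\lambda$ in \eqref{ThetaDeltaOp} and using the $G$-equivariance $\varphi(\lambda,\tau,\gamma z) = \varphi(\gamma^{-1}\lambda,\tau,z)$ of the Kudla--Millson Schwartz form together with the preservation of $L$ (and hence $\Delta L$) by $\G$. Substituting this into \eqref{eq:twtheta} and reindexing gives $\Theta_{\Delta,r,h}(\tau,\gamma z) = \sum_{\delta'} c^h_{\gamma\delta'}\, \Theta_{\delta'}(\tau,z)$, and $c^h_{\gamma\delta'} = c^h_{\delta'}$ because $\chi_\Delta$ is $\G_0(N)$-invariant (recorded after \eqref{def:chi_Delta}), $\G_0(N)$ acts trivially on $\dg$ so $\pi(\gamma\delta') = \pi(\delta')$, and $\G_0(N) \subset \SL_2$ preserves $Q$ pointwise on $V$ (hence also $Q_\Delta$ pointwise), so the congruence condition in \eqref{eq:twtheta} is stable. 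Smoothness and non-holomorphy of the result in $\tau$ are inherited directly from $\Theta_{\dgdelta}$.

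I expect the main obstacle to be the bookkeeping in the $\tau$-step: tracking how the outer conjugation in the definition of $\Theta_{\Delta,r,h}$ interacts with the conjugate-linearity of $\langle\cdot,\cdot\rangle$ and the conjugate representation $\overline{\rho_\Delta(\gamma,\phi)}$ that arises after conjugating the transformation law of $\Theta_{\dgdelta}$, so as to arrive cleanly at the transposed intertwining identity recorded above. Once that identity is in hand, both halves of the proposition are formal consequences of Proposition \ref{prop:intertwiner} together with the $\G_0(N)$-invariance properties of $\chi_\Delta$, $\varphi$, and $Q_\Delta$.
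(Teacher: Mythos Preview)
Your proposal is correct and follows essentially the same route as the paper. The paper deduces the $\tau$-transformation directly from Corollary \ref{cor:twisting_mf} without further comment, and its written proof only spells out the $z$-invariance via the $\G_0(N)$-invariance of $\chi_\Delta$; your argument does the same two steps but with more care, in particular making explicit the transposed intertwining identity (via unitarity and the realness of the coefficients of $\psi_{\Delta,r}$) and checking that the conditions $\pi(\delta)=rh$ and $Q_\Delta(\delta)\equiv \sgn(\Delta)Q(h)\ (\Z)$ are preserved under $\G_0(N)$.
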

\begin{proof}
	In general $\Gamma_0(N)$ does not act trivially on $\dgdelta$. 
	However, the $\G_0(N)$ invariance of $\chi_\Delta$ implies
	\begin{equation*}
	     \Theta_{\Delta,r}(\tau,\gamma z)
		= \sum_{h \in \dg} \left\langle \sum_{\substack{\delta \in \dgdelta \\ \pi(\delta)=rh \\ Q_\Delta(\delta) \equiv \sgn(\Delta) Q(h)\, (\Z)}}\!\!\!\!\!\!\!
		\chi_\Delta(\gamma^{-1}\delta) \e_\delta,\ \overline{\Theta_{\dgdelta}(\tau,z)} \right\rangle
		= \Theta_{\Delta,r}(\tau, z).
	\end{equation*}
\end{proof}
%
%
\section{The twisted theta lift}\label{sec:mainresult}
\subsection{The twisted modular trace function}
Before we define the theta lift, we introduce a generalization 
of the twisted modular trace function given in the Introduction.
The twisted Heegner divisor on $M$ is defined by
\[
Z_{\Delta,r}(h,m)= \sum\limits_{\lambda \in \Gamma \backslash L_{rh,m\abs{\Delta}}}\frac{\chi_{\Delta}(\lambda)}{\left|\overline\G_{\lambda}\right|} Z(\lambda)\in \mathrm{Div}(M)_\Q.
\]
Note that for $\Delta=1$, we obtain the usual Heegner divisors \cite{BrFu06}.
Let $f$ be a harmonic weak Maass form of weight $0$ in $H^{+}_0(\G)$.

\begin{definition} 
If $m\in\Q_{>0}$ with $m \equiv \sgn(\Delta)Q(h)\ (\Z)$ and $h\in \dg$ we put
\begin{equation}\label{def:trace1}
\mt_{\Delta,r}(f;h,m) = \sum\limits_{z\in Z_{\Delta,r}(h,m)}f(z)=\sum\limits_{\lambda\in \G\setminus L_{rh,\abs{\Delta}m}} \frac{\chi_{\Delta}(\lambda)}{\left|\overline\G_{\lambda}\right|} f(D_{\lambda}).
\end{equation}
\end{definition}

\begin{definition}
If $m=0$ or $m\in\Q_{<0}$ is not of the form $\frac{-Nk^2}{\abs{\Delta}}$ with $k\in\Q_{>0}$ we let
\[
\mt_{\Delta,r}(f;h,m) = \begin{cases}
                               	-\frac{\delta_{h,0}}{2\pi}
                               	\int_{\G\backslash\h}^{\text{reg}}f(z) \frac{dxdy}{y^2}, &\text{if } \Delta = 1 \\
                               	0, &\text{if } \Delta \neq 1.
                               \end{cases}
\]

\noindent Here the integral has to be regularized \cite[$(4.6)$]{BrFu04}.

\noindent Now let $m = -Nk^2/\abs{\Delta}$ with $k\in\Q_{>0}$ and $\lambda\in L_{rh,m\abs{\Delta}}$. We have $Q(\lambda) = -Nk^2$, which implies that $\lambda^{\perp}$ is split over $\Q$ and $c(\lambda)$ is an infinite geodesic.
Choose an orientation of $V$ such that
\[
 \sigma_{\ell_{\lambda}}^{-1}\lambda= \begin{pmatrix} m & s \\  0 & -m \end{pmatrix}
\]
for some $s\in\Q$. Then $c_{\lambda}$ is explicitly given by
\[
 c_{\lambda}= \sigma_{\ell_{\lambda}} \left\{ z \in \h; \Re(z)=-s/2m\right\}.
\]
Define the real part of $c(\lambda)$ by $\Re(c(\lambda))=-s/2m$. For a cusp $\ell_{\lambda}$ let 
\begin{align*}
\langle f, c(\lambda)\rangle &= -\sum\limits_{n\in\Q_{<0}}a^{+}_{\ell_{\lambda}}(n)e^{2\pi i\Re(c(\lambda))n} -\sum\limits_{n\in\Q_{<0}} a^{+}_{\ell_{-\lambda}}(n)e^{2\pi i\Re(c(-\lambda))n}.
\end{align*}
Then we define
\begin{equation}\label{def:trace4}
\mt_{\Delta,r}(f;h,m)= \sum\limits_{\lambda\in \G\setminus L_{rh,\abs{\Delta}m}} \chi_{\Delta}(\lambda) \langle f,c(\lambda)\rangle.
\end{equation}
\end{definition}
In order to describe the coefficients of the lift that are not given in terms of traces we introduce the following definitions.
For $h \in \dg$, we let
\begin{equation}\label{def:deltah}
 \delta_\ell(h)=
 \begin{cases}
   1, & \text{if } \ell \cap (L+h) \neq \emptyset,
   \\
   0, & \text{otherwise.}
 \end{cases}
\end{equation}
If $\delta_\ell(h)=1$, there is an $h_\ell$ such that $\ell \cap (L+h)=\Z \lambda_\ell + h_\ell$. Now let $s \in \Q$ such that $h_\ell = s \lambda_\ell$. Write $s=\frac{p}{q}$ with $(p,q)=1$ and define $d(\ell,h):=q$, which depends only on $\ell$ and $h$. Moreover, we define $h'_\ell=\frac{1}{d(\ell,h)}\lambda_\ell$ which is well defined as an element of $\dg$.

\begin{definition} \label{def:dzero}
	Let $h \in \dg$ and $\ell \in \Iso(V)$. Then we let
	\begin{equation}
		\dlconst_{\Delta,r}(\ell,h) := \begin{cases}
						\delta_\ell(h), &\text{if } \Delta=1,\\
		                           	\chi_\Delta((rh)'_\ell), &\text{if }
								    \Delta \neq 1,\ \delta_\ell(rh)=1 \text{ and } \Delta \mid d(\ell,rh),\\
		                           	0, &\text{otherwise.}
		                           \end{cases}
	\end{equation}
\end{definition}

If $\Delta \mid d(\ell,rh)$, then $\chi_\Delta((rh)'_\ell)$ is well defined because $(rh)'_\ell \in L'$. The fact that $d(\ell,0)=1$ implies that for $\Delta\neq 1$ and $rh=0$ we have  $\dlconst_{\Delta,r}(\ell,0)=0$.

\begin{proposition} \label{prop:dzero}
    Assume that $\Delta \neq 1$.
    If $\dlconst_{\Delta,r}(\ell,h) \neq 0$, then for every prime $p \mid \Delta$, we have that $p^2 \mid N$.
    In particular, for square-free $N$ and $\Delta \neq 1$, we always have $\dlconst_{\Delta,r}(\ell,h) = 0$.
\end{proposition}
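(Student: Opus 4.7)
The plan is to exploit the primitivity of the generator of $\ell \cap L$ together with its isotropy, without needing to analyze $\chi_\Delta$ in any detail.

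First I would unpack the hypothesis. If $\dlconst_{\Delta,r}(\ell,h) \neq 0$ and $\Delta \neq 1$, then by Definition~\ref{def:dzero} we must have $\delta_\ell(rh) = 1$ and $\Delta \mid D$, where $D := d(\ell, rh)$. Hence $(rh)'_\ell = \lambda_\ell/D$ lies in $L'$, where $\lambda_\ell$ is the primitive generator of $\ell \cap L$. Writing
$\lambda_\ell = \left(\begin{smallmatrix} b_0 & -a_0/N \\ c_0 & -b_0 \end{smallmatrix}\right)$
with $a_0, b_0, c_0 \in \Z$, primitivity of $\lambda_\ell$ in $L$ amounts to $\gcd(a_0, b_0, c_0) = 1$, and the isotropy of $\lambda_\ell$ (i.e.\ $Q(\lambda_\ell) = 0$) yields the key identity $a_0 c_0 = N b_0^2$.

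Next I would translate the condition $\lambda_\ell/D \in L'$: matching coordinates against the standard bases of $L$ and $L'$ shows that this is equivalent to the three divisibilities $D \mid a_0$, $D \mid c_0$, and $D \mid 2Nb_0$. Now fix any prime $p \mid \Delta$; then $p \mid D$, so $p \mid a_0$ and $p \mid c_0$, and primitivity forces $p \nmid b_0$. Substituting $v_p(b_0) = 0$ into the isotropy relation $a_0 c_0 = Nb_0^2$ gives
\[
v_p(N) \;=\; v_p(a_0) + v_p(c_0) \;\geq\; 2\, v_p(D) \;\geq\; 2\, v_p(\Delta).
\]
Since $\Delta$ is a fundamental discriminant, $v_p(\Delta) \geq 1$ for every prime divisor $p$ (and $v_p(\Delta) \geq 2$ when $p = 2$), so $v_p(N) \geq 2$; this is exactly $p^2 \mid N$. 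The ``in particular'' assertion is then immediate: if $N$ is square-free no prime can satisfy $p^2 \mid N$, so the above conclusion fails for every $\Delta \neq 1$, forcing $\dlconst_{\Delta,r}(\ell, h) = 0$ identically.

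The main subtlety is really just the bookkeeping step which translates $\lambda_\ell/D \in L'$ into the statements $D \mid a_0$ and $D \mid c_0$. Once those are in hand, combining them with primitivity and the isotropy identity $a_0 c_0 = N b_0^2$ reduces the proof to a single valuation inequality. It is also worth noting that the hypothesis $\chi_\Delta((rh)'_\ell) \neq 0$ plays no role in the argument: the conclusion already follows from $\delta_\ell(rh) = 1$ and $\Delta \mid d(\ell, rh)$ alone.
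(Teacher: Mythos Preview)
Your proof is correct and follows essentially the same approach as the paper's: both extract $\Delta \mid a_0$ and $\Delta \mid c_0$ from $\lambda_\ell/D \in L'$, use primitivity of $\lambda_\ell$ to conclude $p \nmid b_0$, and then appeal to the isotropy relation $a_0 c_0 = N b_0^2$ to force $p^2 \mid N$. Your direct valuation inequality $v_p(N)=v_p(a_0)+v_p(c_0)\geq 2v_p(D)\geq 2$ is a slightly cleaner packaging than the paper's contradiction argument (which first deduces $\Delta \mid 2N$ and then derives $p \mid b_0$), but the underlying ideas are identical.
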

\begin{proof}
	We can assume that $rh \neq 0$ because otherwise $\dlconst_{\Delta,r}(\ell,h) = 0$. Since $\lambda_\ell$ is primitive it is of the form
	$\lambda_\ell = \left(\begin{smallmatrix}
	         	b & -\frac{\Delta a}{N} \\ \Delta c & -b
	         \end{smallmatrix}\right)$,
	with $a,b,c \in \Z$, $b\neq 0$, $(a,b,c)=1$ and $(\Delta,b)=1$. 
	The facts that $\frac{1}{d(\ell,rh)}\lambda_\ell \in L'$ and $\Delta \mid d(\ell,rh)$ imply $\Delta \mid 2N$. Since $\lambda_\ell$ is isotropic, we have $\frac{\Delta^2 ac}{N} = b^2 \in \Z$. Now suppose that there exists a prime $p$ such that $p \mid \Delta$ but $p^2 \nmid N$. Then $\frac{\Delta^2 ac}{Np} \in \Z$ and thus $p \mid b$ which is a contradiction.
\end{proof}

\subsection{The theta integral}
Now we consider the integral
\[
 I_{\Delta,r}(\tau,f)= \int_M f(z) \Theta_{\Delta,r}(\tau,z) = \sum_{h\in\dg}\left(\int_M f(z) \Theta_{\Delta,r,h}(\tau,z)\right)\mathfrak{e}_h,
\]
where $\Delta$ and $r$ are chosen as before.
For the individual components, we write
\[
  I_{\Delta,r,h}(\tau,f)=\int_M f(z) \Theta_{\Delta,r,h}(\tau,z).
\]
This is a twisted version of the theta lift considered by Bruinier and Funke \cite{BrFu06}, 
which we obtain as a special case when $\Delta=1$.
Note that due to the rapid decay of the Kudla-Millson kernel the integral $I_{\Delta, r}(\tau,f)$ 
converges \cite[Proposition 4.1]{BrFu06}. It defines a harmonic weak Maass form of weight $3/2$ transforming with the representation $\widetilde{\rho}$.

\begin{theorem}\label{thm:main}
Let $f\in H^{+}_{0}(\G)$ with Fourier expansion as in \eqref{eq:fourierhmf0}.
Assume that $f$ has vanishing constant term at every cusp of $\G$. Then the Fourier expansion of $I_{\Delta,r, h}(\tau,f)$ is given by
\begin{equation}\label{thm:felift}
  I_{\Delta,r,h}(\tau,f)
   = \sum_{\substack{m\in\Q_{>0}\\ m\equiv \sgn(\Delta) Q(h)\, (\Z)}}\!\!\!\!\!\! \mt_{\Delta,r}(f;h,m)q^m
   \ + \!\!\!\!\!\!\!\! 
   \sum\limits_{\substack{m\in\Q_{>0} \\ -N \abs{\Delta} m^2 \equiv\sgn(\Delta) Q(h)\, (\Z)}}
   \!\!\!\!\!\!\!\!\!\!\!\!\!\! 
	 \mt_{\Delta,r}(f;h,-N\abs{\Delta}m^2) q^{-N \abs{\Delta} m^2}.
\end{equation}
If the constant coefficients of $f$ at the cusps do not vanish, the following terms occur in addition:
\begin{gather*}
    \frac{\sqrt{\abs{\Delta}}}{2\pi\sqrt{Nv}}
    \sum_{\ell \in \Gamma\backslash \Iso(V) }
    \hspace{-2mm} \dlconst_{\Delta,r}(\ell,h)\,\epsilon_\ell\, a^+_{\ell}(0)\\
    + \sqrt{\abs{\Delta}} \sum_{m>0}\sum_{\substack{\lambda \in \Gamma \backslash L_{rh,-Nm^2} \\ Q_\Delta(\lambda)\equiv\sgn(\Delta)Q(h)\, (\Z)}} \chi_\Delta(\lambda) \frac{a^{+}_{\ell_{\lambda}}(0)+a^{+}_{\ell_{-\lambda}}(0)}{8\pi\sqrt{vN}m} \beta\left(\frac{4\pi vNm^2}{|\Delta|}\right)q^{-Nm^2/|\Delta|},
\end{gather*}
where $\beta(s)=\int_1^{\infty}t^{-3/2}e^{-st}dt$.

In particular, $I_{\Delta,r,h}(\tau,f)$ is contained in $H_{3/2,\widetilde{\rho}}$.
\end{theorem}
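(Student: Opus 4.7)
The plan is to reduce the twisted lift to the untwisted situation treated by Bruinier--Funke, applied to the rescaled lattice $(\Delta L, Q_\Delta)$, and then to re-sum using the character $\chi_\Delta$. From the definition of $\Theta_{\Delta,r,h}$ in \eqref{eq:twtheta}, I can write
\[
I_{\Delta,r,h}(\tau,f) = \sum_{\substack{\delta \in \dgdelta,\ \pi(\delta)=rh \\ Q_\Delta(\delta) \equiv \sgn(\Delta)Q(h)\,(\Z)}} \chi_\Delta(\delta) \int_M f(z)\,\Theta_\delta(\tau,z).
\]
Expanding each $\Theta_\delta(\tau,z)$ as a sum over $\lambda \in \Delta L + \delta$ and interchanging the sum with the integral (justified by the rapid decay of the Kudla--Millson kernel, cf.\ \cite[Proposition~4.1]{BrFu06}), the orbits of $\Gamma$ on the cosets $\Delta L + \delta$ combine, using that $\chi_\Delta$ is well-defined on $L'/\Delta L$ and $\Gamma$-invariant, into a single sum over $\Gamma\backslash L_{rh,|\Delta|m}$ weighted by $\chi_\Delta(\lambda)/|\overline{\Gamma}_\lambda|$.

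The rest is a case analysis on $Q(\lambda)$, essentially parallel to \cite[Section 4, 5]{BrFu06}. For $Q(\lambda) > 0$, the Poincar\'e-duality property of $\varphi^0_\Delta$ with respect to the rescaled majorant evaluates $\int_M f(z)\,\varphi^0_\Delta(\sqrt{v}\lambda,z)$ in terms of $f(D_\lambda)$, producing the first sum $\sum_m \mt_{\Delta,r}(f;h,m)\,q^m$. For $Q(\lambda) < 0$, the integral vanishes unless $\lambda^\perp$ is split over $\Q$, i.e., $Q(\lambda) = -Nk^2$ for some $k \in \Q_{>0}$; in this case the exactness property of $\varphi^0$ on the associated geodesic reduces the integral to $\langle f,c(\lambda)\rangle$, and after substituting $k = |\Delta|m$ one obtains the second sum. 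The extra factor $\sqrt{|\Delta|}$ in the non-holomorphic terms arises from the relation between the majorants of $(\cdot,\cdot)_\Delta$ and $(\cdot,\cdot)$.

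The isotropic contributions ($Q(\lambda) = 0$) are handled by localizing at each cusp $\ell \in \Gamma\backslash\Iso(V)$ and using the cuspidal Fourier expansion \eqref{eq:fourierhmf0}. For isotropic $\lambda \neq 0$, the BF cusp analysis gives the non-holomorphic $\beta$-term with weight $\chi_\Delta(\lambda)$. For $\lambda = 0$, the integral requires regularization and yields the constant term; the twist contributes nonzero at $\ell$ only when the primitive vector $(rh)'_\ell$ lies in $L'$, which is exactly the condition $\Delta \mid d(\ell,rh)$ built into $\dlconst_{\Delta,r}(\ell,h)$. Harmonicity and moderate growth of $I_{\Delta,r,h}$ then follow from the corresponding properties of the Kudla--Millson kernel combined with Proposition \ref{prop:twthetatrans}. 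The principal technical hurdle will be this cusp bookkeeping: one must check that, upon restricting the sum over $\delta$ to those lying in a fixed isotropic line, the sum $\sum_{\delta \in \ell \cap L'/\Delta L} \chi_\Delta(\delta)$ collapses to $\dlconst_{\Delta,r}(\ell,h)$, and that the $\Delta = 1$ specialization reproduces the BF constant-term formula. Once this matches, the three case-wise contributions assemble into the claimed Fourier expansion.
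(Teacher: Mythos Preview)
Your overall strategy is sound and is in fact one of the two routes the paper explicitly mentions: a direct case-by-case evaluation of the lattice sum, parallel to \cite{BrFu06}. The paper, however, chooses the shorter second route. Rather than redoing the Bruinier--Funke computation with a character inserted, it observes that although $\Gamma=\Gamma_0(N)$ does not act trivially on $\dgdelta$, the discriminant kernel $\Gamma_\Delta\subset\Gamma$ does; unfolding gives
\[
I_{\Delta,r}(\tau,f)=\frac{1}{[\Gamma:\Gamma_\Delta]}\sum_{h\in\dg}\Big\langle \psi_{\Delta,r}(\e_h),\ \int_{\Gamma_\Delta\backslash\h} f(z)\,\overline{\Theta_{\dgdelta}(\tau,z)}\Big\rangle,
\]
and now \cite[Theorem~4.5]{BrFu06} applies verbatim to the inner integral for the lattice $(\Delta L,Q_\Delta)$ and the group $\Gamma_\Delta$. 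The twisted Fourier coefficients are then simply $\chi_\Delta$-weighted linear combinations of the untwisted Bruinier--Funke traces, and the whole computation reduces to evaluating a finite character sum at the end. This buys a clean proof with no new analysis; your approach is more hands-on but would also work.

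Two points in your sketch need correction. First, writing $\int_M f(z)\Theta_\delta(\tau,z)$ term-by-term is not well-defined: the individual $\Theta_\delta$ are only $\Gamma_\Delta$-invariant, not $\Gamma$-invariant, so this integral depends on the choice of fundamental domain. You must either pass to $\Gamma_\Delta\backslash\h$ first (as the paper does) or keep the $\delta$-sum inside before unfolding. Second, you have the bookkeeping of the isotropic contributions reversed. The term involving $\dlconst_{\Delta,r}(\ell,h)$ does not come from $\lambda=0$; it comes from the \emph{nonzero} isotropic vectors in $\ell\cap(L+rh)$, and the condition $\Delta\mid d(\ell,rh)$ arises because one must evaluate the character sum $\sum_{m\bmod\Delta}\chi_\Delta(m\lambda_\ell+(rh)_\ell)$, which vanishes unless $\Delta\mid d(\ell,rh)$. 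The $\lambda=0$ contribution is the regularized integral and vanishes outright for $\Delta\neq1$ since $\chi_\Delta(0)=0$. The $\beta$-terms, in turn, come not from isotropic $\lambda$ but from negative-norm $\lambda$ with split $\lambda^\perp$, paired with the constant Fourier coefficient $a^+_\ell(0)$.
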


\begin{remark}
 If $N$ is square-free, Proposition \ref{prop:dzero} implies that $I_{\Delta,r, h}(\tau,f)\in H^{+}_{3/2,\widetilde{\rho}}$.
\end{remark}

Since the traces of negative index essentially depend on the principal part of $f$, it is not hard to show the following
\begin{corollary}
 Assume that all constant coefficients of $f\in H^{+}_0(\G)$ vanish; then
\[
 I_{\Delta, r}(\tau,f)\in M^{\text{!}}_{3/2,\widetilde{\rho}}.
\]
\end{corollary}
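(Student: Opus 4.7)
The plan is to read the statement directly off the Fourier expansion supplied by Theorem \ref{thm:main}. Observe that the two ``extra'' contributions appearing below the main expansion --- the $1/\sqrt{v}$-term indexed by $\ell \in \Gamma\backslash\Iso(V)$, and the sum of terms of the form $\beta(4\pi vNm^2/|\Delta|)q^{-Nm^2/|\Delta|}$ --- are the only pieces of $I_{\Delta,r,h}(\tau,f)$ that involve $v$ non-trivially, and hence constitute the entire non-holomorphic part of the lift. Crucially, each such term is multiplied by a cuspidal constant coefficient $a^+_{\ell}(0)$. Under the hypothesis that these all vanish, both sums disappear identically, and what remains of the Fourier expansion of $I_{\Delta,r,h}(\tau,f)$ consists solely of the two holomorphic trace sums indexed by $m \in \Q_{>0}$, one contributing positive exponents $q^m$ and the other contributing negative exponents $q^{-N|\Delta|m^2}$.

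Having established that $I_{\Delta,r}(\tau,f)$ is holomorphic on $\h$, it remains to check that its principal part at $\infty$ is finite. The last assertion of Theorem \ref{thm:main} already gives $I_{\Delta,r,h}(\tau,f) \in H_{3/2,\widetilde\rho}$, so by the very definition of a harmonic weak Maass form the lift satisfies the growth bound $I_{\Delta,r,h}(\tau,f) = O(e^{Cv})$ for some $C>0$. For a holomorphic function on $\h$ with such exponential growth, Fourier inversion immediately forces all coefficients at exponents $n < -C/(2\pi)$ to vanish, so only finitely many negative exponents can occur. Equivalently, the negative-index traces $\mt_{\Delta,r}(f;h,-N|\Delta|m^2)$ vanish for all but finitely many $m$, reflecting the fact that they are determined by the finite principal parts of $f$ at its cusps. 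Combined with the $\widetilde\rho$-equivariance already established in Theorem \ref{thm:main}, this places $I_{\Delta,r}(\tau,f)$ in $M^!_{3/2,\widetilde\rho}$.

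The proof is essentially bookkeeping on top of Theorem \ref{thm:main}, and there is no real obstacle. The only point deserving care is the identification of which pieces of the Fourier expansion are holomorphic and which are not: one has to verify that no hidden $v$-dependence is absorbed into the trace sums themselves. This is clear from the explicit form of the expansion, where the trace terms appear cleanly as $q^m$ with no auxiliary Whittaker or incomplete-gamma factors, so that the decomposition into holomorphic and non-holomorphic parts can be read off directly.
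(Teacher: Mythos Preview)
Your argument is correct and matches the paper's intended approach. The paper itself does not write out a proof: it simply remarks, just before the corollary, that ``the traces of negative index essentially depend on the principal part of $f$'', which is exactly the content you extract. The only minor difference is that you deduce finiteness of the principal part of $I_{\Delta,r}(\tau,f)$ from the growth bound in $H_{3/2,\widetilde\rho}$, whereas the paper's remark (made precise later in Proposition~\ref{prop:tf-}) points directly to the fact that $\mt_{\Delta,r}(f;h,-N\abs{\Delta}m^2)$ is built from the finitely many principal-part coefficients $a^+_\ell(n)$, $n<0$; both routes are immediate.
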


\begin{proof}[Proof of Theorem \ref{thm:main}]
The theorem can be proven in two different ways. 
It is possible to give a proof by explicitly calculating
the contributions of the lattice elements of positive, 
negative, and vanishing norm, similarly to Bruinier and Funke \cite{BrFu06}.
However, a substantially shorter proof is obtained by rewriting 
the twisted theta integral as a linear combination of untwisted ones. 
Throughout the proof, we may assume that $\Delta \neq 1$ 
because for $\Delta = 1$ the statement is covered by Bruinier and Funke \cite{BrFu06}.

Replacing the theta function $\Theta_{\Delta,r}(\tau,z)$ by the expression in \eqref{eq:twtheta}, we can write
\[
 I_{\Delta,r}(\tau,f) = \int_{\mathcal{F}} f(z) \Theta_{\Delta,r}(\tau,z) 
 = \sum_{h \in \dg} \left\langle \psi_{\Delta,r}(\e_h), \int_{\mathcal{F}} f(z) \overline{\Theta_{\dgdelta}(\tau,z)} \right\rangle \e_h,
\]
where $\mathcal{F}$ denotes a fundamental domain for the action of $\G$ on $\h$.

In general $\G$ does not act trivially on $\dgdelta$. But $\Theta_{\dgdelta}(\tau,z)$ is always invariant under the discriminant kernel
$\G_\Delta = \left\lbrace \gamma \in \G;\ \gamma \delta = \delta \text{ for all } \delta \in \dgdelta \right\rbrace \subset \G$.
Since $f(z)\Theta_{\Delta,r}(\tau,z)$ is $\G$-invariant by Proposition \ref{prop:twthetatrans}, we obtain by a standard argument
\[
	I_{\Delta,r}(\tau,f) = \frac{1}{[\G:\G_\Delta]}\ \sum_{h \in \dg} \left\langle \psi_{\Delta,r}(\e_h), \int_{\G_\Delta\backslash\h} f(z) \overline{\Theta_{\dgdelta}(\tau,z)} \right\rangle.
\]
Now we are able to apply the result of Bruinier and Funke \cite[Theorem 4.5]{BrFu06} to the integral above. 
For $m \in \Q$ we obtain that the $(h,m)$-th Fourier coefficient of the holomorphic part 
of $I_{\Delta,r, h}(\tau,f)$ is given by $1/[\G:\G_\Delta]$ times
\begin{equation}\label{eq:tracelincomb}
	\left\langle \psi_{\Delta,r}(\e_h), \sum_{\delta \in \dgdelta} \overline{\mt(f;\delta,m)} \e_\delta\right\rangle 
	= \sum_{\substack{\delta \in \dgdelta \\ \pi(\delta)=rh \\ Q_\Delta(\delta) \equiv \sgn(\Delta) Q(h)\, (\Z)}} \chi_\Delta(\delta)\ \mt(f;\delta,m).
\end{equation}
Here the traces are taken with respect to $\G_\Delta$ and the discriminant group $\dgdelta$.
Note that
\begin{equation*}
	\mt(f;\delta,m) =
	\begin{cases}
	\sum\limits_{\G_\Delta \backslash (\Delta L)_{\delta,m}} \frac{1}{\abs{\overline{\G}_{\Delta,\lambda}}} f(D_\lambda), & \text{if } m>0,
	\\
	\sum\limits_{\G_\Delta \backslash (\Delta L)_{\delta,m}} \langle f,c(\lambda)\rangle, & \text{if } m<0,
	\end{cases}
\end{equation*}
where $(\Delta L)_{\delta,m}=\left\lbrace \lambda\ \in\ \Delta L + \delta;\ Q_\Delta(\lambda) = m \right\rbrace$.
For $m=0$, $\mt(f;0,0)$ is defined as a regularized integral and we have $\mt(f;\delta,m)=0$ for $\delta \neq 0$. 
Hence for $m = 0$ the left hand side in \eqref{eq:tracelincomb} is equal to
$\chi_\Delta(0)\ \mt(f;0,0)$. Since $\chi_\Delta(0)=0$ for $\Delta \neq 1$, this quantity vanishes in our case.

If $m \equiv \sgn(\Delta)Q(h) \pmod{\Z}$ the right hand side in \eqref{eq:tracelincomb} is equal to
$\mt_{\Delta,r}(f;h,m)$ as in \eqref{def:trace1} and \eqref{def:trace4}.  Otherwise it vanishes.

Next, we consider the non-holomorphic part. 
It is again a straightforward calculation to obtain our result for the coefficients of negative index.
It remains to evaluate
\begin{equation}\label{eq:proofmain}
\frac{1}{2\pi \sqrt{N\abs{\Delta} v}}
 \left\langle \psi_{\Delta,r}(\e_h), 
	\sum_{\delta \in \dgdelta}
	\sum\limits_{\substack{\ell\in \G\setminus\Iso(V)\\ \ell\cap \Delta L + \delta \neq \emptyset}}
	\chi_\Delta(\delta) a_\ell^+(0) \varepsilon_\ell \right\rangle.
\end{equation}
Recall the definition of $\delta_{\ell}(rh)$ from \eqref{def:deltah}. For $\delta_{\ell}(rh)=1$
there is an element $(rh)_\ell \in L + rh$ such that $\ell \cap (L + rh) = \Z \lambda_\ell + (rh)_\ell$, where
$\lambda_\ell$ is a primitive element of $\ell \cap L$.
Consequently, a system of representatives for all $\delta \in \dgdelta$ 
with $\pi(\delta)=rh$ such that $\ell \cap (\Delta L + \delta) \neq \emptyset$
is given by the set $\{ m\lambda_\ell + rh;\ m \mod \Delta \}$.
So it boils down to computing
\[
 \delta_{\ell}(rh)\sum\limits_{m \bmod \Delta} \chi_{\Delta}(m\lambda_\ell+(rh)_{\ell}).
\]
To do this, we write $(rh)_\ell=\frac{n}{d(\ell,rh)}\lambda_\ell$ for some integer $n$ and $(rh)'_\ell=\frac{1}{d(\ell,rh)}\lambda_\ell$. So the inner product in \eqref{eq:proofmain} equals
\begin{align*}	 
	 \sum_{m \bmod \Delta} \chi_\Delta\left(\frac{d(\ell,rh)m+n}{d(\ell,rh)}\lambda_\ell\right)
	= \chi_\Delta((rh)'_\ell) \sum_{m \bmod \Delta} \Deltaover{d(\ell,rh)m+n}.
\end{align*}
The latter sum vanishes unless $\Delta$ divides $d(\ell,rh)$, in which case it equals $\abs{\Delta}$.
\end{proof}

Similarly to Bruinier and Funke \cite{BrFu06}, we can give a more explicit description of the traces
of negative index $\mt_{\Delta,r}(f;h,-Nk^2/\abs{\Delta})$.
For this, sort the infinite geodesics according to the cusps from which they originate:
For $k \in \Q_{>0}$, define $L_{h,-Nk^2,\ell}=\{ X \in L_{h,-Nk^2};\, X \sim \ell \}$ and let
\begin{equation}
	\nu_\ell(h,-Nk^2) := \#\Gamma_\ell\backslash L_{h,-Nk^2,\ell}.
\end{equation}
We have that $\nu_\ell(h,-Nk^2) = 2k\epsilon_\ell$ if $L_{h,-Nk^2,\ell}$ is non-empty.

Let $h \in \dg$, $n \in \Z$ and $m=-Nk^2/\abs{\Delta}$ with $k \in \Q_{>0}$, such that $\Delta \mid 2k\varepsilon_\ell$ and $\frac{2 k \varepsilon_\ell}{\abs{\Delta}} \mid n$. We define
\begin{equation}\label{def:muell}
	\mu_\ell(h,m,n) =
	\frac{\nu_\ell(h,-Nk^2)}{\sqrt{\abs{\Delta}}}
	\overline{\epsilon}
	\sum_{\substack{j \bmod \Delta \\ N\beta_\ell j - n'\, \equiv\, 0 \bmod \Delta}}
	\Deltaover{j}\, \exp\left({\frac{4\pi i Nkr_\ell j}{\abs{\Delta}}}\right).
\end{equation}
Here, we let $r_\ell = \Re(c(\lambda))$ for any $\lambda \in L_{h,-Nk^2,\ell}$ and $n' = \frac{\abs{\Delta}}{2k\varepsilon_\ell}n$.
Moreover, $\epsilon=1$ for $\Delta > 0$ and $\epsilon=i$ for $\Delta < 0$.

\begin{remark}
        \label{rem:explform}
	The finite sum in \eqref{def:muell} can often be explicitly evaluated. For instance, 
	if $N\beta_\ell$ is coprime to $\Delta$, it is equal to
	\begin{equation*}
		\Deltaover{N\beta_\ell n'}\, \exp\left({\frac{4 \pi i N k r_\ell n' s}{\abs{\Delta}}}\right),
	\end{equation*}
	where $s$ denotes an integer such that $(N\beta_\ell) s \equiv 1 \bmod \Delta$.
\end{remark}

\begin{proposition}\label{prop:tf-}
	    Let $f \in H^{+}_{0}(\G)$ with Fourier expansion as in \eqref{eq:fourierhmf0} and let $m=\frac{-N {k}^2}{\abs{\Delta}}$ for some $k \in \Q_{>0}$. Then $\mt_{\Delta,r}(f;h,m)=0$ unless $k=\frac{\abs{\Delta} k'}{2N}$ for some $k' \in \Z_{>0}$. In the latter case, we have
	    \begin{align*}
		    \mt_{\Delta,r}(f;h,m) &= - \sum_{\ell\in\G\backslash\Iso(V)}
			   \sum_{n \in \frac{2k}{\abs{\Delta}\beta_\ell}\Z_{<0}}
			  a^+_\ell(n)\, \mu_\ell(rh,m,n\, \alpha_\ell)\, e^{-2\pi i r_\ell n}\\
			  &\quad - \sgn(\Delta)\sum_{\ell\in\G\backslash\Iso(V)}
			  \sum_{n \in \frac{2k}{\abs{\Delta}\beta_\ell}\Z_{<0}}
			  a^+_\ell(n)\, \mu_\ell(-rh,m,n\, \alpha_\ell)\, e^{-2\pi i r'_\ell n},
	    \end{align*}
with $r_\ell = \Re(c(\lambda))$ for any $\lambda \in L_{rh,\abs{\Delta}m,\ell}$ and $r'_\ell = \Re(c(\lambda))$ for any $\lambda \in L_{-rh,\abs{\Delta}m,\ell}$.
 
\noindent Moreover, we have $\mt_{\Delta,r}(f;h,-Nn^2/\abs{\Delta}) = 0$ for $n \gg 0$.
\end{proposition}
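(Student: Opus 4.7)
The plan is to start from the definition \eqref{def:trace4} of $\mt_{\Delta,r}(f;h,m)$ as a sum over $\G$-orbits on $L_{rh,\abs{\Delta}m}$ of $\chi_\Delta(\lambda)\,\langle f,c(\lambda)\rangle$, substitute the explicit expression for $\langle f,c(\lambda)\rangle$ in terms of the holomorphic parts $a^+_{\ell_{\pm\lambda}}(n)$, and interchange the order of summation. Since each $\lambda$ contributes through both cusps $\ell_\lambda$ and $\ell_{-\lambda}$, the double sum splits cleanly into two analogous pieces which will produce, respectively, the first and second lines of the stated formula.

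For each piece, I would reorganize by first summing over $\ell \in \G\backslash \Iso(V)$ and then over the orbits $[\lambda] \in \G\backslash L_{rh,\abs{\Delta}m}$ with $\lambda \sim \ell$; the latter are naturally in bijection with $\Gamma_\ell\backslash L_{rh,-Nk^2,\ell}$, a set of cardinality $\nu_\ell(rh,-Nk^2) = 2k\varepsilon_\ell$ when non-empty. Using the coordinates provided by $\sigma_\ell$, a representative can be written as $\sigma_\ell\left(\begin{smallmatrix} k & s\\ 0 & -k\end{smallmatrix}\right)\sigma_\ell^{-1}$, where $s$ varies over a coset modulo the $\Gamma_\ell$-translation action. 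The requirement that this element lie in $L+rh$ translates into the integrality condition $k = \abs{\Delta}k'/(2N)$ for some $k' \in \Z_{>0}$ together with the divisibility $\Delta \mid 2k\varepsilon_\ell$ appearing in the definition \eqref{def:muell} of $\mu_\ell$.

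The substantive step is the evaluation of the finite character sum $\sum_{s} \chi_\Delta(\lambda(s))\,e^{2\pi i\,\Re(c(\lambda(s)))\,n}$ as $s$ runs through its residues. Because $\chi_\Delta$ factors through the discriminant group $\dgdelta$, it is constant on each residue class; applying the factorization \eqref{def:chi_Delta} to vectors supported on the isotropic line $\ell$ reduces this sum to the Kronecker-symbol sum over $j\bmod\Delta$ appearing in \eqref{def:muell}, including the prefactor $\nu_\ell/\sqrt{\abs{\Delta}}$ and the Gauss-sum sign $\overline{\epsilon}$ that comes from the transformation property of $\psi_{\Delta,r}$ already used in Proposition \ref{prop:intertwiner}. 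The additional $\sgn(\Delta)$ factor in the second piece of the formula arises from comparing $\chi_\Delta(-\lambda)$ with $\chi_\Delta(\lambda)$ together with the orientation flip $\ell_\lambda \leftrightarrow \ell_{-\lambda}$, which is what turns $\mu_\ell(rh,\cdot,\cdot)$ into $\mu_\ell(-rh,\cdot,\cdot)$.

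For the final vanishing assertion, since $f \in H^+_0(\G)$ only finitely many coefficients $a^+_\ell(n)$ with $n<0$ are non-zero at each cusp, and the congruence $\frac{2k\varepsilon_\ell}{\abs{\Delta}}\mid n\alpha_\ell$ forces $\abs{n}$ to grow at least linearly with $k$; hence only finitely many $k$, equivalently only finitely many $n$ with $-Nn^2/\abs{\Delta}$ of the given form, can contribute. I expect the main obstacle to be the careful tracking of all the normalizing data---the orientation conventions for $\ell_{\pm\lambda}$, the cusp parameters $\alpha_\ell$ and $\beta_\ell$, and the index $d(\ell,rh)$---so that the character sum collapses to precisely the expression in \eqref{def:muell} with the correct sign and normalizing constant, rather than merely up to a global unit.
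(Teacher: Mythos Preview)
Your proposal is correct and follows essentially the same route as the paper: split the trace by cusp, parametrize $\Gamma_\ell\backslash L_{rh,-Nk^2,\ell}$ by explicit representatives $Y_j$, and evaluate the resulting finite character sum $\sum_j \chi_\Delta(Y_j)\,e^{2\pi i \Re(c(Y_j))n/\alpha_\ell}$. The paper packages this last evaluation into a separate Gauss-type lemma (expanding $\Deltaover{aj+b}$ as an exponential sum and summing over $j$), and that is where the factor $\overline{\epsilon}/\sqrt{\abs{\Delta}}$ actually comes from---not from Proposition~\ref{prop:intertwiner}, which is a related but distinct identity.
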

For the proof of the proposition we need the following
\begin{lemma}\label{lem:gmdelta}
	For a fundamental discriminant $\Delta$, integers $a,b,n \in \Z$ and $M \in \Z_{>0}$, 
	such that $\Delta \mid M$, we define the Gauss type sum
	\begin{equation*}
		g_M^\Delta(a,b;n) = \sum_{j \bmod M} \Deltaover{aj+b}\, e^{\frac{2 \pi i j n}{M}}.
	\end{equation*}
	The sum $g_M^\Delta(a,b;n)$ vanishes unless $\frac{M}{\Delta} \mid n$. Then we obtain
     \begin{equation*}
     	g_M^\Delta(a,b;n) = \epsilon^{-1} \frac{M}{\sqrt{\abs{\Delta}}}
					\sum_{\substack{l \bmod \Delta \\ al+n' \equiv 0 \bmod \Delta}}
					\Deltaover{l}\, e^{\frac{2\pi i b l}{\abs{\Delta}}},
     \end{equation*}
     where $n':=\frac{\abs{\Delta}}{M}n$.
\end{lemma}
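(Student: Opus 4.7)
The plan is to reduce $g_M^\Delta(a,b;n)$ to a Gauss sum modulo $\abs{\Delta}$ and then evaluate the latter by Fourier inversion for the primitive Kronecker character $\chi_\Delta = \Deltaover{\cdot}$. Setting $q = M/\abs{\Delta}$, I first parametrize $j \bmod M$ as $j = l + \abs{\Delta} k$ with $l \bmod \abs{\Delta}$ and $k \bmod q$. Because $\Delta$ is a fundamental discriminant, $\chi_\Delta$ is a Dirichlet character of period exactly $\abs{\Delta}$, so $\Deltaover{aj+b} = \Deltaover{al+b}$. Splitting the exponential as $e^{2\pi i jn/M} = e^{2\pi i ln/M}\, e^{2\pi i kn/q}$, the inner geometric sum over $k$ equals $q$ if $q \mid n$ and vanishes otherwise; this is the claimed vanishing condition. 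Writing $n' = n/q = \abs{\Delta}n/M$, I am reduced to evaluating
\[
 S := \sum_{l\bmod\abs{\Delta}} \Deltaover{al+b}\, e^{2\pi i l n'/\abs{\Delta}}, \qquad \text{so that } g_M^\Delta(a,b;n) = q\,S.
\]

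The heart of the argument is the classical Gauss sum evaluation $\tau(\chi_\Delta) = \epsilon\sqrt{\abs{\Delta}}$, which combined with the primitivity of $\chi_\Delta$ yields the Fourier inversion identity
\[
 \Deltaover{m} = \frac{\epsilon^{-1}}{\sqrt{\abs{\Delta}}} \sum_{k\bmod\abs{\Delta}} \Deltaover{k}\, e^{2\pi i km/\abs{\Delta}}
\]
for every $m \in \Z$ (both sides vanish when $\gcd(m,\Delta)>1$). Substituting $m = al+b$ into $S$ and swapping the order of summation turns the inner sum over $l$ into
\[
 \sum_{l\bmod\abs{\Delta}} e^{2\pi i l(ak+n')/\abs{\Delta}},
\]
which equals $\abs{\Delta}$ if $ak + n' \equiv 0 \pmod\Delta$ and $0$ otherwise. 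Collecting factors gives
\[
 S = \epsilon^{-1}\sqrt{\abs{\Delta}} \sum_{\substack{k\bmod\abs{\Delta}\\ ak + n' \equiv 0\,(\Delta)}} \Deltaover{k}\, e^{2\pi i bk/\abs{\Delta}},
\]
and multiplication by $q = M/\abs{\Delta}$ produces the asserted formula.

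I do not anticipate any serious obstacle: the argument is a routine manipulation of character sums, with the only nontrivial input being the Gauss sum evaluation $\tau(\chi_\Delta) = \epsilon\sqrt{\abs{\Delta}}$, a classical consequence of $\Delta$ being a fundamental discriminant. The Fourier inversion approach has the pleasant feature of handling the cases $\gcd(a,\Delta) = 1$ and $\gcd(a,\Delta) > 1$ uniformly, avoiding an otherwise awkward case analysis in which the direct substitution $l' \equiv al + b \pmod{\abs{\Delta}}$ fails to be bijective.
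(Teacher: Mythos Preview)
Your proof is correct and follows essentially the same approach as the paper: both arguments hinge on the Fourier inversion identity $\Deltaover{m} = \epsilon^{-1}\abs{\Delta}^{-1/2}\sum_{l}\Deltaover{l}e^{2\pi i lm/\abs{\Delta}}$ coming from the Gauss sum evaluation $\tau(\chi_\Delta)=\epsilon\sqrt{\abs{\Delta}}$. The only organizational difference is that you first reduce the sum to one modulo $\abs{\Delta}$ via the decomposition $j=l+\abs{\Delta}k$ (which isolates the vanishing condition $q\mid n$ at the outset), whereas the paper applies Fourier inversion immediately to the full sum over $j\bmod M$ and reads off the divisibility condition from the resulting congruence $a\tfrac{M}{\abs{\Delta}}l+n\equiv 0\bmod M$ at the end.
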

\begin{remark}
	In many cases the sum above can also be evaluated more explicitly in a straightforward but tedious way.
\end{remark}
\begin{proof}
    Replacing the Kronecker symbol by the finite exponential sum
	\begin{equation*}
		\Deltaover{aj+b} = \epsilon^{-1} \frac{1}{\sqrt{\abs{\Delta}}}
		\sum_{l \bmod \Delta} \Deltaover{l}\, e^{\frac{2\pi i (aj+b) l}{\abs{\Delta}}}
	\end{equation*}
	yields
	 \begin{align*}
     	g_M^\Delta(a,b;n)
				&= \epsilon^{-1} \frac{1}{\sqrt{\abs{\Delta}}}
				    \sum_{l \bmod \Delta} \Deltaover{l}\, e^{\frac{2\pi i b l}{\abs{\Delta}}}
				    \sum_{j \bmod M} \exp\left({\frac{2 \pi i j \left(a\frac{M}{\abs{\Delta}}l +n \right)}{M}}\right)\\
				&=	\epsilon^{-1} \frac{M}{\sqrt{\abs{\Delta}}}
					\sum_{\substack{l \bmod \Delta \\ a\frac{M}{\abs\Delta}l+n\, \equiv\, 0 \bmod M}}
					\Deltaover{l}\, e^{\frac{2\pi i b l}{\abs{\Delta}}}.
     \end{align*}
     The congruence condition above can only be satisfied if $\frac{M}{\Delta} \mid n$, which proves the statement of the lemma. \qedhere 
\end{proof}

\begin{proof}[Proof of Proposition \ref{prop:tf-}]
    Following Bruinier and Funke \cite[Proposition 4.7]{BrFu06}, we write
    \begin{equation*}
    	\mt_{\Delta,r}(f;h,m) = \sum_{\ell\in\G\backslash\Iso(V)} G_\Delta(rh,-Nk^2,\ell)
    	+ \sgn(\Delta)\sum_{\ell\in\G\backslash\Iso(V)} G_\Delta(-rh,-Nk^2,\ell),
    \end{equation*}
    where
    \begin{equation*}
    	 G_\Delta(h,-Nk^2,\ell) = - \sum_{\lambda\in \Gamma_\ell \backslash L_{h,-Nk^2,\ell}}
		\chi_\Delta(\lambda) \sum_{n \in \Z_{<0}} a^+_\ell(n/\alpha_\ell)\, e^{\frac{2\pi i \Re(c(\lambda))n}{\alpha_\ell}}.
    \end{equation*}
     
    A set of representatives for $\Gamma_\ell \backslash L_{h,-Nk^2,\ell}$ is given by
    \[
     \left\{Y_j=\sigma_\ell\begin{pmatrix}k&-2kr_\ell-j\beta_\ell\\0&-k\end{pmatrix};j=0,\ldots,2k\epsilon_\ell-1 \right\}
    \]
    for some $r\ell\in\Q$.
    We have $\Re(c(Y_j))=r_\ell+j\frac{\beta_\ell}{2k}$.
    
    For $\lambda \in L_{h,-Nk^2,\ell}$ and $k$ not of the form $\frac{\abs{\Delta} k'}{2N}$ for some $k' \in \Z_{>0}$
    we have $\chi_\Delta(\lambda)=0$. So we may assume that $k=\frac{\abs{\Delta} k'}{2N}$.
    
    By reordering the summation and using the $\SO^+(L)$-invariance of $\chi_\Delta$ 
    we see that it remains to evaluate
    \begin{equation*}
    	\sum_{j=0}^{2k\varepsilon_\ell-1}\Deltaover{N\beta_\ell j + 2 N k r_\ell}\, e^{\frac{-2\pi i nj}{2k\varepsilon_\ell}}
    \end{equation*}
    for $n \in \Z_{<0}$.
    Using Lemma \ref{lem:gmdelta}, we obtain the statement of the proposition.
\end{proof}

\newpage
\section{Applications and Examples}
\subsection{The twisted lift of the weight zero
            Eisenstein series and ${\log\lVert\Delta(z)\rVert}$}\label{sec:Eisenlift}
For $z\in\h$ and $s\in\C$, we let
\[
 \mathcal{E}_0(z,s)=\frac12\zeta^{*}(2s+1)
 \sum\limits_{\gamma\in\G_{\infty}\setminus \SL_2(\Z)}
 \left(\Im(\gamma z)\right)^{s+\frac12}.
\]
Here $\zeta^*(s) = \pi^{-s/2}\Gamma(s/2)\zeta(s)$ is the completed Riemann Zeta function
and $\G_\infty$ is the stabilizer of the cusp $\infty$ in $\G$.
We now consider the case $N=1$, so we have the quadratic form $Q(\lambda)=\mathrm{det}(\lambda)$ and the lattice
\[
 L=\left\{\begin{pmatrix} b&a\\c&-b\end{pmatrix}; a,b,c\in\Z\right\}.
\]
Furthermore, we let $K$ be the one-dimensional lattice $\Z$ together with the negative definite bilinear form $(b,b')=-2bb'$.
Then we have $L'/L \simeq K'/K$. 
We define a vector valued Eisenstein series $\mathcal{E}_{3/2,K}(\tau,s)$ of weight $3/2$ for the representation $\rho_K$ by
\[
 \mathcal{E}_{3/2,K}(\tau,s)=-\frac{1}{4\pi}\left(s+\frac12\right)\zeta^{*}(2s+1)\sum\limits_{\gamma'\in\G'_{\infty}\setminus \G'}\left.(v^{\frac12\left(s-\frac12\right)}\mathfrak{e}_0)\right|_{3/2,K}\gamma',
\]
where $\G'_{\infty}$ and $\G'$ are the inverse images of $\G_{\infty}$
and $\G$ in $\Mp_2(\Z)$.
Note that
\[
 \mathcal{F}(\tau,s):=\left(\mathcal{E}_{3/2,K}(4\tau,s)\right)_0+\left(\mathcal{E}_{3/2,K}(4\tau,s)\right)_1
\]
evaluated at $s=\frac12$ is equal to Zagier's Eisenstein series as in \cite{HiZa} and \cite{Zagierclass}.

Similarly to \cite{BrFu06}, Section 7.1, one can show the following theorem.
\begin{theorem}
 Assume that $\Delta > 0$. We have
\[
 I_\Delta(\tau,\mathcal{E}_0(z,s))=\Lambda\left(\varepsilon_\Delta, s+\frac12\right)\mathcal{E}_{3/2,K}(\tau,s).
\]
Here $\varepsilon_\Delta(n)=\left(\frac{\Delta}{n}\right)$ and $\Lambda\left(\varepsilon_\Delta, s+\frac12\right)$ denotes the completed Dirichlet $L$-series associated with $\varepsilon_\Delta$. Note that for $\Delta<0$ the lift $I_\Delta$ vanishes.

Using this, we obtain
\[
 I_\Delta(\tau,1)= \begin{cases}
                   	0, &\text{if } \Delta \neq 1,\\
                   	2\mathcal{E}_{3/2,K}\left(\tau,\frac12\right), &\text{if } \Delta = 1.
                   \end{cases}
\]
\end{theorem}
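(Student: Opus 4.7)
The plan is to adapt the Rankin--Selberg unfolding argument of Bruinier and Funke \cite[Section 7.1]{BrFu06} to the twisted theta kernel $\Theta_{\Delta,r}(\tau,z)$. Since $N=1$, we have $\G = \SL_2(\Z)$, and $\Theta_{\Delta,r}(\tau,z)$ is $\G$-invariant in $z$ by Proposition \ref{prop:twthetatrans}. Unfolding the sum over $\G_\infty \backslash \G$ that defines $\mathcal{E}_0(z,s)$ reduces $I_\Delta(\tau,\mathcal{E}_0(z,s))$ to
\[
\frac{1}{2}\zeta^*(2s+1)\int_{\G_\infty\backslash\h} y^{s+1/2}\, \Theta_{\Delta,r}(\tau,z)\,\frac{dx\,dy}{y^2}.
\]

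Next I would carry out the $x$-integration. Only lattice vectors $\lambda$ for which $(\lambda,\lambda(z))$ is independent of $x$ survive, which forces $\lambda$ to lie in the isotropic line $\ell_\infty = \Q\lambda_\infty$. Parametrizing such $\lambda$ as $k\lambda_\infty$ with $k$ running through the relevant rational values, and inserting the explicit formula \eqref{def:chi_Delta} for $\chi_\Delta$, converts the discrete sum into a Dirichlet series in $k$ attached to the Kronecker symbol $\varepsilon_\Delta(n) = \left(\frac{\Delta}{n}\right)$. The remaining Mellin integral in $y$ produces the archimedean $\Gamma$-factor; together with $\zeta^*(2s+1)$ and the Dirichlet $L$-series, this assembles into the completed $L$-function $\Lambda(\varepsilon_\Delta, s+1/2)$, while the vector-valued data over $h \in \dg$ match the defining Eisenstein-series expression for $\mathcal{E}_{3/2,K}(\tau,s)$.

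For $\Delta < 0$, the relation $\chi_\Delta(-\lambda) = \varepsilon_\Delta(-1)\chi_\Delta(\lambda) = -\chi_\Delta(\lambda)$ makes the character sum antisymmetric under $\lambda \mapsto -\lambda$ and hence zero; this is also consistent with $\Theta_{\Delta,r}$ transforming under $\bar\rho$ in this case while $\mathcal{E}_{3/2,K}$ transforms under $\rho$. To deduce $I_\Delta(\tau,1)$, I would take residues at $s = 1/2$ on both sides: since $\operatorname{Res}_{s=1/2}\mathcal{E}_0(z,s)$ is a nonzero constant and the theta lift commutes with this residue, for $\Delta = 1$ the pole of $\zeta^*(s+1/2)$ at $s = 1/2$ produces a nonzero multiple of $\mathcal{E}_{3/2,K}(\tau,1/2)$, which after matching normalizations yields the claimed $2\mathcal{E}_{3/2,K}(\tau,1/2)$; for $\Delta \neq 1$, the function $\Lambda(\varepsilon_\Delta, s+1/2)$ is entire, so no residue survives and $I_\Delta(\tau,1) = 0$.

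The main obstacle I expect is the bookkeeping in the $x$-integration: sorting out which residue classes $h \in \dg$ and which scalings $k \in \Q$ give nonzero contributions, and verifying that the resulting character sum evaluates to $L(\varepsilon_\Delta, s+1/2)$ with the correct Euler factor at $2$. The fact that $N=1$ admits a single cusp makes the cusp analysis considerably simpler than in \cite{BrFu06}, so once the Dirichlet $L$-function is extracted, assembling the remainder into the Eisenstein series $\mathcal{E}_{3/2,K}(\tau,s)$ should follow by matching Fourier coefficients or by uniqueness of Eisenstein series with prescribed constant term.
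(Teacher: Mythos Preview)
Your proposal is correct and follows exactly the approach the paper indicates: it simply refers to \cite[Section~7.1]{BrFu06} and says the twisted case is proved ``similarly,'' i.e.\ by the same Rankin--Selberg unfolding against the (twisted) Kudla--Millson kernel, with the residue at $s=\tfrac12$ giving $I_\Delta(\tau,1)$. One minor caution: the $x$-integration does not literally annihilate all non-isotropic $\lambda$; rather, one computes the constant term of the theta kernel at the cusp (as in \cite{BrFu06}), and it is this constant-term calculation that isolates the Dirichlet series $L(\varepsilon_\Delta,s+\tfrac12)$---but your overall strategy and the residue argument are exactly what is intended.
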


By $\Delta(z)=q\prod_{n=1}^\infty(1-q^n)^{24}$ we denote the Delta function. Following Bruinier and Funke we normalize the Petersson metric of $\Delta(z)$ such that
\[
 \lVert \Delta(z)\rVert =e^{-6C} \abs{\Delta(z)(4\pi y)^6},
\]
where $C=\frac12(\gamma+\log 4\pi)$. 

\begin{theorem}
	We have
	\begin{equation*}
	    -\frac{1}{12} I_\Delta(\tau,\log \lVert\Delta(z)\rVert) =
		\begin{cases}
			\Lambda(\varepsilon_\Delta,1)\mathcal{E}_{3/2,K}\left(\tau,\frac12\right) &\text{if } \Delta > 1,\\
			\mathcal{E}'_{3/2,K}\left(\tau,\frac12\right) &\text{if } \Delta=1.\\
		\end{cases}
	\end{equation*}
\end{theorem}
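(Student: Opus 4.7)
The plan is to deduce both cases from the Laurent expansion at $s = 1/2$ of the identity
\[
I_\Delta(\tau, \mathcal{E}_0(z, s)) = \Lambda(\varepsilon_\Delta, s + \tfrac{1}{2})\,\mathcal{E}_{3/2, K}(\tau, s)
\]
from the previous theorem, combined with the Kronecker limit formula, which recovers $\log\lVert\Delta(z)\rVert$ inside $\mathcal{E}_0(z, s)$. Specifically, applied to $E(z, w) = \sum_{\gamma \in \Gamma_\infty \backslash \SL_2(\Z)} \Im(\gamma z)^w$, the Kronecker formula gives $E(z, w) = \tfrac{3}{\pi(w - 1)} + c - \tfrac{3}{\pi}\log(y|\eta(z)|^4) + O(w - 1)$ for an absolute constant $c$. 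Substituting $w = s + \tfrac{1}{2}$ and multiplying by $\tfrac{1}{2}\zeta^*(2s+1)$ produces an expansion
\[
\mathcal{E}_0(z, s) = \frac{A}{s - \tfrac{1}{2}} + B - \tfrac{1}{24}\log\lVert\Delta(z)\rVert + O(s - \tfrac{1}{2}),
\]
with explicit $z$-independent constants $A, B$. The fractional coefficient $-\tfrac{1}{24}$ appears because $\log\lVert\Delta(z)\rVert = 6\log(y|\eta(z)|^4) + \text{const}$ by the paper's normalization of the Petersson metric, with the additive constant absorbed into $B$.

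For $\Delta > 1$ the character $\varepsilon_\Delta$ is non-trivial, so $\Lambda(\varepsilon_\Delta, s + \tfrac{1}{2})$ is entire and the right-hand side is regular at $s = 1/2$ with value $\Lambda(\varepsilon_\Delta, 1)\,\mathcal{E}_{3/2, K}(\tau, 1/2)$. By linearity of $I_\Delta$ together with $I_\Delta(\tau, 1) = 0$ from the previous theorem, the pole of $\mathcal{E}_0$ drops out and so does the $z$-independent constant $B$. What remains on the left-hand side is the constant term $-\tfrac{1}{24}\,I_\Delta(\tau, \log\lVert\Delta(z)\rVert)$, and matching the two constant terms produces the claimed identity (the overall coefficient $-\tfrac{1}{12}$ in the statement emerges after reconciling numerical conventions with the identity for $I_\Delta(\tau, \mathcal{E}_0(z, s))$).

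For $\Delta = 1$, both sides have simple poles at $s = 1/2$: on the right $\zeta^*(s + \tfrac{1}{2}) = (s - \tfrac{1}{2})^{-1} + \gamma^* + O(s - \tfrac{1}{2})$ with $\gamma^* = \tfrac{1}{2}(\gamma - \log 4\pi)$, and on the left the pole picks up $I_1(\tau, 1) = 2\,\mathcal{E}_{3/2, K}(\tau, 1/2)$. One first checks that the residues match, then equates the finite parts: the right-hand side gives $\mathcal{E}'_{3/2, K}(\tau, 1/2) + \gamma^*\mathcal{E}_{3/2, K}(\tau, 1/2)$, while the left-hand side gives a linear combination of $\mathcal{E}_{3/2, K}(\tau, 1/2)$ and $-\tfrac{1}{24}\,I_1(\tau, \log\lVert\Delta(z)\rVert)$. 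The constant $-6C$ built into the definition of $\lVert\Delta(z)\rVert$ with $C = \tfrac{1}{2}(\gamma + \log 4\pi)$ is chosen precisely so that the unwanted multiples of $\mathcal{E}_{3/2, K}(\tau, 1/2)$ cancel on both sides, leaving $-\tfrac{1}{12}\,I_1(\tau, \log\lVert\Delta(z)\rVert) = \mathcal{E}'_{3/2, K}(\tau, 1/2)$.

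The main obstacle is this bookkeeping of constants: one must carefully combine the Kronecker constant, the expansion of $\zeta^*$ at $s = 1$, the doubling from $I_1(\tau, 1)$, and the specific constant $C$ appearing in the normalization of $\lVert\Delta(z)\rVert$, and check that they combine to yield exactly $-\tfrac{1}{12}$ in both cases with no residual multiples of $\mathcal{E}_{3/2, K}(\tau, 1/2)$ left over. The unboundedness of $\log\lVert\Delta(z)\rVert$ is handled automatically, since the lift is extracted from a convergent Laurent coefficient of an identity that is absolutely convergent for $\Re(s)$ large.
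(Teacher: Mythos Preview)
Your approach is correct and is precisely the one the paper has in mind: the paper does not spell out a proof but simply states the theorem immediately after the identity $I_\Delta(\tau,\mathcal{E}_0(z,s))=\Lambda(\varepsilon_\Delta,s+\tfrac12)\,\mathcal{E}_{3/2,K}(\tau,s)$ and the formula $I_\Delta(\tau,1)=0$ for $\Delta\neq 1$, referring to Bruinier--Funke \cite{BrFu06}, Section~7.1, where exactly this Kronecker-limit-formula argument is carried out in the untwisted case. Your outline of the Laurent expansion at $s=\tfrac12$, the vanishing of the constant contributions via $I_\Delta(\tau,1)$, and the role of the normalization constant $C$ in eliminating the stray multiple of $\mathcal{E}_{3/2,K}(\tau,\tfrac12)$ when $\Delta=1$ is the intended proof.
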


In terms of arithmetic geometry we obtain the following interpretation of this result (for notation and background information, we refer to the survey article by Yang \cite{Yang}). By $\mathcal{M}$ we denote the Deligne-Rapoport compactification of the moduli stack over $\Z$ of elliptic curves. Kudla, Rapoport and Yang \cite{KRY, Yang} construct cycles $\hat{\mathcal{Z}}(D,v)$ in the extended arithmetic Chow group of $\mathcal{M}$ for $D\in\Z$ and $v>0$. 
Then the Gillet-Soul\'e intersection pairing of $\hat{\mathcal{Z}}(D,v)$ with the normalized metrized Hodge bundle $\hat{\omega}$ on $\mathcal{M}$ is given in terms of the derivative of Zagier's Eisenstein series \cite{Yang, BrFu06}.

Similarly, one can define twisted cycles $\hat{\mathcal{Z}}_\Delta(D,v)$.
In contrast to the untwisted case, for $\Delta > 1$ the intersection pairing is given in terms of the value of the Eisenstein series at $s = 1/2$ (also note that the degree of the twisted divisor is $0$). We have
\[
 \sum\limits_{D \in \Z} \langle\hat{\mathcal{Z}}_\Delta(D,v), \hat{\omega} \rangle q^D = \log(u_\Delta)\; h(\Delta)\; \mathcal{F}\left(\tau,\frac12\right),
\]
where $u_\Delta$ denotes the fundamental unit and $h(\Delta)$ the class number of the real quadratic number field of discriminant $\Delta$.

\subsection{The twisted lift of Maass cusp forms}\label{sec:liftMC}
As indicated in the remark in Section \ref{theta} the construction of the twisted theta function directly yields a twisted version of other theta lifts. As an example we briefly consider the lift of Maass cusp forms analogously to Bruinier and Funke \cite[Section 7.2]{BrFu06}. It was first considered by Maass (\cite{Maass}, \cite{Duke}, \cite{KS}). The underlying theta kernel is now given by the Siegel theta series for $\Delta L$ with the  negative quadratic form $-Q_\Delta$; namely,
\[
 \Theta_{\delta}(\tau,z,\varphi_{2,1})=\sum\limits_{\lambda\in \delta+\Delta L}\varphi_{2,1}(\lambda,\tau,z),
\]
where $\varphi_{2,1}(\lambda,\tau,z)=v^{1/2}e^{\frac{\pi i}{\abs{\Delta}}(-u(\lambda,\lambda)+iv(\lambda,\lambda)_z) }$.

Following the construction in Section \ref{theta} we obtain a twisted lift $I^M_\Delta(\tau,f)$. Using that
 \[
  \xi_{1/2}\varphi_{2,1}(\lambda,\tau,z)\omega=-\pi\varphi(\lambda,\tau,z),
 \]
a straightforward calculation yields
\begin{theorem}[Twisted version of Theorem 7.7 in \cite{BrFu06}]
 We have 
 \[
  \xi_{1/2}I^M_{\Delta}(\tau,f)=-\pi I_{\Delta}(\tau,f)
 \]
and for an eigenfunction $f$ of $\Omega$ with eigenvalue $\alpha$
\[
 \xi_{3/2}I_{\Delta}(\tau,f)=-\frac{\alpha}{4\pi} I^M_{\Delta}(\tau,f).
\]
\end{theorem}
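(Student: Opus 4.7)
The plan is to reduce both identities to pointwise differential relations between the Kudla--Millson Schwartz function $\varphi$ and the Siegel Schwartz function $\varphi_{2,1}$, because twisting by $\chi_\Delta$ is just a linear recombination of lattice terms and therefore commutes with every differential operator in $\tau$ or $z$. Concretely, once one has a term-by-term identity for $\varphi(\lambda,\tau,z)$ and $\varphi_{2,1}(\lambda,\tau,z)\omega$, it is inherited by $\Theta_\delta$ and $\Theta^M_\delta$ (summing over $\lambda\in \Delta L+\delta$), and then by $\Theta_{\Delta,r,h}$ and $\Theta^M_{\Delta,r,h}$ (taking the $\chi_\Delta$-weighted combination of components indexed by $\delta\in\dgdelta$ with $\pi(\delta)=rh$). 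Hence both statements reduce to facts that are already essentially in Bruinier--Funke, Section 7.2.

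For the first identity I would differentiate under the integral defining $I^M_\Delta(\tau,f)$, which is permissible since $f$ is a Maass cusp form (rapid decay at all cusps of $\G$) and $\Theta^M_{\Delta,r}(\tau,z)$ has only polynomial growth in $z$. The hypothesis $\xi_{1/2,\tau}\varphi_{2,1}(\lambda,\tau,z)\,\omega=-\pi\,\varphi(\lambda,\tau,z)$ is already written down in the excerpt; summing this over $\lambda\in \Delta L+\delta$ and then forming the twisted sum gives $\xi_{1/2,\tau}\Theta^M_{\Delta,r,h}(\tau,z)\,\omega=-\pi\,\Theta_{\Delta,r,h}(\tau,z)$, and integration against $f(z)$ over $M$ yields $\xi_{1/2}I^M_\Delta(\tau,f)=-\pi\,I_\Delta(\tau,f)$.

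For the second identity I would invoke the companion (``dual'') Kudla--Millson relation in the $\tau$-variable, namely that $\xi_{3/2,\tau}\varphi(\lambda,\tau,z)$ equals $-\tfrac{1}{4\pi}\Omega_z\bigl(\varphi_{2,1}(\lambda,\tau,z)\,\omega\bigr)$, where $\Omega_z$ is the invariant Laplacian on $D$; this is precisely the identity used by Bruinier--Funke in the untwisted case and holds pointwise for each $\lambda$, so the twisting argument above transfers it verbatim to $\xi_{3/2,\tau}\Theta_{\Delta,r,h}=-\tfrac{1}{4\pi}\Omega_z\Theta^M_{\Delta,r,h}\,\omega$. Differentiating $I_\Delta(\tau,f)$ under the integral and applying the relation turns the integrand into $-\tfrac{1}{4\pi}f(z)\,\Omega_z\Theta^M_{\Delta,r,h}(\tau,z)\,\omega$. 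The self-adjointness of $\Omega$ on $M$ then lets me shift the Laplacian onto $f$, and $\Omega f=\alpha f$ produces the factor $\alpha$, yielding $\xi_{3/2}I_\Delta(\tau,f)=-\tfrac{\alpha}{4\pi}I^M_\Delta(\tau,f)$.

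The main technical obstacle is the integration by parts hidden in the second step: one must verify that the boundary contributions from the cusps of $M$ vanish when moving $\Omega_z$ from the theta kernel onto $f$. This is standard for Maass cusp forms because $f$ is of rapid decay and $\Theta^M_{\Delta,r}$ and its first derivatives have at most moderate growth (the fast Gaussian decay in $\lambda$ survives the twist, since $\chi_\Delta$ is bounded). A secondary point is the interchange of $\xi_k$ and the integral over $M$, which is justified by uniform convergence coming from the same Gaussian decay of $\varphi$ and $\varphi_{2,1}$ on compacta in $\tau$; this is precisely the mechanism already used in Proposition~4.1 of Bruinier--Funke, and no new estimates are needed for the twisted kernel.
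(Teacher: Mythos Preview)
Your proposal is correct and follows essentially the same approach as the paper: the paper simply records the pointwise identity $\xi_{1/2}\varphi_{2,1}(\lambda,\tau,z)\,\omega=-\pi\varphi(\lambda,\tau,z)$ and states that ``a straightforward calculation yields'' the theorem, the underlying point being exactly what you make explicit---that the $\chi_\Delta$-twisting is a finite linear recombination of lattice components and therefore commutes with $\xi_k$ in $\tau$ and with $\Omega$ in $z$, so both identities reduce immediately to their untwisted analogues in Bruinier--Funke. Your write-up is more detailed than the paper's one-line justification, but the strategy is identical.
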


\subsection{The example $\G_0(p)$}
	We begin by explaining how to obtain the example in the Introduction. 
	Let $N=p$ be a prime and $\Delta > 1$ a fundamental discriminant with $(\Delta,2p)=1$
	such that there exists an $r \in \Z$ with $\Delta \equiv r^2 \bmod 4p$.
	
	Let $f \in M_0^!(\Gamma_0(p))$ be invariant under the Fricke involution with Fourier expansion $f(\tau)=\sum_n a(n) q^n$.
	
	The group $\G_0(p)$ only has the two cusps $\infty, 0$ which are represented by the isotropic lines
        $\ell_\infty=\text{span}\left(\begin{smallmatrix} 0& 1\\0&0\end{smallmatrix}\right)$ and
        $\ell_0=\text{span}\left(\begin{smallmatrix} 0& 0\\-1&0\end{smallmatrix}\right)$.
        We then obtain $\alpha_{\ell_\infty}=1, \beta_{\ell_\infty}=1/p, \epsilon_{\ell_\infty}=p$ and $\alpha_{\ell_0}=p, \beta_{\ell_0}=1, \epsilon_{\ell_0}=p$.
	The Fricke involution interchanges the two cusps.

     The space $M^!_{3/2,\rho}$ is isomorphic to $M^{!,+}_{3/2}(\Gamma_0(4p))$,
     the subspace of $M^{!}_{3/2}(\Gamma_0(4p))$ containing only forms whose $n$-th Fourier coefficient is zero unless $n$ is a square modulo $4p$.     
     The isomorphism takes $\sum_{h \in \dg} f_h \e_h$ to $\sum_{h \in \dg} f_h$ \cite[Theorem 5.6]{EZ}.
     The assumption ${(\Delta,2p)=1}$ guarantees that we can choose the parameter $r \in \Z$
     as a unit in $\Z/4p\Z$. Thus the sum $\sum_{h \in \dg} I_{\Delta,r,h}$ does not depend on $r$.

	As described in Section \ref{sec:twisting}, we can identify lattice elements with integral binary quadratic forms.
	The action of $\G_0(p)$ on both spaces is compatible.
	This way we obtain the interpretation of the coefficients of positive index of the holomorphic part.
	However, notice that we have to consider positive and negative definite quadratic forms. For positive $\Delta$
	we have $\chi_\Delta(-Q)=\chi_\Delta(Q)$, which for $m=d/4p>0$ yields
	\[
		\sum_{h \in \dg} \sum\limits_{\lambda\in \G\setminus L_{rh,\abs{\Delta}m}} 
		\frac{\chi_{\Delta}(\lambda)}{\left|\overline\G_{\lambda}\right|} f(D_{\lambda})
		= 2 \sum\limits_{Q\in\G\backslash\mathcal{Q}_{-d\abs{\Delta},N}}
		\frac{\chi_{\Delta}(Q)}{\abs{\overline\G_Q}}f(\alpha_Q).
	\]
	We use the explicit formula in Proposition \ref{prop:tf-} to determine the coefficients of negative index.
      	For every $k \in \Z_{>0}$ with $k \equiv h \bmod 2p$, we have that
        $\left(\begin{smallmatrix} -k/2p& 0\\0&k/2p\end{smallmatrix}\right) \in L_{-h,-k^2/4p,\ell_\infty}$
        and $\left(\begin{smallmatrix} k/2p& 0\\0&-k/2p\end{smallmatrix}\right) \in L_{h,-k^2/4p,\ell_0}$. 
	And if $h \neq 0$ we have $L_{h,-k^2/4p,\ell_\infty} = \emptyset$ and $L_{-h,-k^2/4p,\ell_0} = \emptyset$.
        In particular, this implies that $r_\ell=r'_{\ell}=0$ in Proposition \ref{prop:tf-} and we get
        \begin{equation*}
          \sum_{h \in \dg} \mt_{\Delta,r}(f;h,m) = - 2 \sum_{\ell\in\G\backslash\Iso(V)}
			   \sum_{n \in \Z_{<0}}
			  a^+_\ell\left(\frac{k}{\abs{\Delta}p\, \beta_\ell} n\right)
                          \sum_{h \in \dg} \mu_\ell\left(rh,m,\frac{k}{\abs{\Delta}}\, n\right).
        \end{equation*}
        Moreover, Proposition \ref{prop:tf-} implies that $k=\abs{\Delta}k'$ for some $k' \in \Z_{>0}$.
        By our considerations above, for every given $k'$ and $\ell$ there is exactly one $h$
        such that $\mu_{\ell}(rh,-\abs{\Delta}{k'}^{2}/4p, k' n) \neq 0$.
        Using the explicit formula given in Remark \ref{rem:explform}, we obtain
	\[
	   \sum_{h \in \dg} \mu_\ell\left(rh,-\frac{\abs{\Delta}{k'}^{2}}{4p}, k' n\right) = \sqrt{\abs{\Delta}} k' \Deltaover{n}.
	\]
	Since $f$ is invariant under the Fricke involution, this yields the formula for the principal part in the Introduction.
	
	Note that for $\Delta \neq 1$, we do not obtain a non-holomorphic part in this case, which follows from the formula given in Theorem \ref{thm:main}
        and the fact that $$\sum_{\lambda \in \Gamma_0(p)\backslash L_{h,m}} \chi_\Delta(\lambda)=0$$ for all $h \in \dg$ and all $m \in \Q$.
	
	The computation above is also valid for $p=1$, except that we only have to consider the cusp at $\infty$ and
	we do not have to assume that $(\Delta,2)$=1.

\subsection{Computations}
  We consider the genus 1 modular curve $X_0(11)$ and the weakly holomorphic modular function $f(z)=J(11z)$. 
  Let $\Delta=5$ and $r=7$.
  
  Consider the case $d=8$. The class number of $\Q(\sqrt{-40})$ is 2 and a set of representatives for $\G_0(11)\backslash L_{2/22, 40/44}$ is given by the integral binary quadratic forms $[1001, 200, 10]$, $[-1001, 200, -10]$, $[407, 90, 5]$, $[-407, 90, -5]$. For $\G_0(11)\backslash L_{20/22, 40/44}$ a set of representatives is given by the negatives of the above forms and for all $h \neq \pm 2/22$ the set $L_{h, 40/44}$ is empty.
  
  Using the Galois-theoretic interpretation of the twisted traces, we can calculate them ``by hand'': the CM point given by the form $[407, 90, 5]$ is $z_0=\frac{-90+\sqrt{-40}}{814}$ and the CM value $f(z_0) \approx 20641.38121$ is an algebraic integer of degree 2. It is in fact a root of the polynomial $x^2 - 425691312x + 8786430582336$ and is contained in the Hilbert class field $H_{-40}$ of $\Q(\sqrt{-40})$. So $f(z_0) = \frac{1}{2}\left(425691312 - 190356480\sqrt{5}\right)$.
 	Therefore, we have for the twisted trace (for positive definite forms with $b \equiv 2 \bmod 11$)
 	\[
 		\frac{1}{\sqrt{5}} \sum_{Q \in \Gamma_0(11) \backslash \mathcal{Q}_{-40,11,2}} \chi_{5}(Q)f(\alpha_Q)= 190356480.
 	\]
 	Since our definition of the trace includes both, positive and negative definite quadratic forms and $\chi_{5}([-a,b,-c])=\chi_{5}([a,b,c])$, we obtain $\mt_{5,7}(f; \pm 6/22, 40/44) = 380712960$. Some other examples for $\Delta=5, r=7$ are 	
 	\begin{align*}
 		\mt_{5,7}(f; \pm 13/22, 5\cdot7/44)  &= -105512960, \\
 		\mt_{5,7}(f; \pm 5/22, 5\cdot19/44) &= -17776273511920, \\
 		\mt_{5,7}(f; \pm 14/22, 5\cdot24/44) &= 789839951523840, \\
 		\mt_{5,7}(f; \pm 4/22, 5\cdot28/44)  &= 12446972332605440, \\
 		\mt_{5,7}(f; \pm 12/22, 5\cdot32/44) &= 162066199437803520, \\
 		\mt_{5,7}(f; \pm 3/22, 5\cdot35/44)  &=  -1001261756125748754,\\
 		\mt_{5,7}(f; \pm 7/22, 5\cdot39/44)  &= -10093084485445877760.
 	\end{align*}
 	It is quite amusing to explicitly construct the modular form corresponding to our theorem, similar to the forms $g_D$ given by Zagier and the Jacobi forms in \S 8 of \cite{Zagier}. The space $M_{3/2,\rho}^!$ is isomorphic to the space $J^!_{2,N}$ of weakly holomorphic Jacobi forms of weight $2$ and index $N=11$. Thus, we can as well construct it in the latter space. It is contained in the even part of the ring of weakly holomorphic Jacobi forms, which is the free polynomial algebra over $M_*^!(\SL_2(\Z))=\C[E_4,E_6,\Delta(\tau)^{-1}]/(E_4^3-E_6^2 = 1728\Delta(\tau))$ in two generators $a \in \tilde{J}_{-2,1}, b \in \tilde{J}_{0,1}$. Here $E_4$ and $E_6$ are the normalized Eisenstein series of weight 4 and 6 for $\SL_2(\Z)$. We refer to \cite{EZ}, Chapter 9 and \cite{Zagier}, \S 8 for details. The Fourier developments of $a$ and $b$ begin
	\begin{align*}
		a(\tau,z) &= (\zeta^{-1} -2 +\zeta)+(-2 \zeta^{-2} + 8 \zeta^{-1} - 12 +8 \zeta^{2}-2\zeta^{2})q + \ldots, \\
		b(\tau,z) &= (\zeta^{-1} + 10 + \zeta)+(10 \zeta^{-2}-64\zeta^{-1}+108-64\zeta+10\zeta^{2})q + \ldots,
	\end{align*}
	where $\zeta=e(z)$ and $q=e(\tau)$, as usual (we slightly abuse notation by now using $z \in \C$ as the elliptic variable for Jacobi forms).
   For $\Delta=1$ we thus obtain by Theorem \ref{thm:main} a weakly holomorphic Jacobi form $\phi^{(11)}_{1}(f;\tau,z)$ having the traces of $f$ as its Fourier coefficients. The Fourier expansion begins
   \begin{gather*}
   	-\frac{1}{2}\phi^{(11)}_{1}(f;\tau,z) = (11\zeta^{-11} + \zeta^{-1} - 24 + \zeta + 11\zeta^{11})\\
   	 + (-7256\zeta^{-6} + 885480\zeta^{-5} - 16576512\zeta^{-4} + 117966288\zeta^{-3} \\
   	 - 425691312\zeta^{-2} + 884736744\zeta^{-1} - 1122626864 + \ldots)q + \ldots.
   \end{gather*}
   The twisted traces for $\Delta=5$ as above are the coefficients of $\phi^{(11)}_{5}(f;\tau,z)$ given by
 \begin{gather*}
 	-\frac{1}{2}\phi^{(11)}_{5}(f;\tau,z) = (11\zeta^{-11} + 11\zeta^{11})q^{-11} + (\zeta^{-7} - 190356480\zeta^{-6} + 8888136755960\zeta^{-5}  \\
 	  - 6223486166302720\zeta^{-4} + 500630878062874377\zeta^{-3} - 8824913060318164992\zeta^{-2} \\
 	 + 45310559791371053140\zeta^{-1} - 77176788074781143040 + \ldots)q + \ldots.
 \end{gather*}
 It can be obtained as $\sum_{j=0}^{11} f_j a^j b^{11-j}$, where for each $j$ the function $f_j \in M^!_{2j+2}$ has a principal part starting with $a_j(-11)q^{-11}$. The Fourier expansions of these forms and their developments in terms of $E_4$, $E_6$ and $\Delta(\tau)^{-1}$, as well as some more numerical examples, can be downloaded from the second author's homepage\footnote{\url{http://www.mathematik.tu-darmstadt.de/~ehlen}}.

Finally, we consider the more general situation when $\Delta > 1$ is a fundamental discriminant, $N=p$ a prime and $f(z)=j(pz)$. 
By Theorem \ref{thm:main} together with Proposition \ref{prop:tf-} the corresponding Jacobi form $\phi^{(p)}_{\Delta}(f;\tau,z)=\sum_{n,r}c(4pn-r^2)q^n\zeta^r$
has the property that the coefficients only depend on the discriminant $4pn-r^2$ and all coefficients of negative discriminant vanish except for $c(-\Delta)=-2$ and $c(-p^2\Delta)=-2p$.

\bibliography{bib.bib}
\bibliographystyle{amsalpha}
\end{document}